\newtheorem{theorem}{Theorem}
\newtheorem{remark}[theorem]{Remark}
\newtheorem{definition}[theorem]{Definition}
\newtheorem{proposition}[theorem]{Proposition}
\newtheorem{corollary}[theorem]{Corollary}
\newtheorem{lemma}[theorem]{Lemma}
\newcommand{\R}{\mathbb R}
\numberwithin{theorem}{section}
\numberwithin{equation}{section}
\begin{document}

\title[AdS rest mass]{The rest mass of an asymptotically  Anti-de Sitter spacetime}
\author{Po-Ning Chen, Pei-Ken Hung, Mu-Tao Wang, and Shing-Tung Yau}
\date{December 27, 2015}
\begin{abstract}  We study the space of Killing fields on the four dimensional AdS spacetime $AdS^{3,1}$. Two subsets $\mathcal{S}$ and $\mathcal{O}$ are 
identified: $\mathcal{S}$ (the spinor Killing fields) is constructed from imaginary Killing spinors, and $\mathcal{O}$ (the observer Killing fields) consists of all hypersurface orthogonal, future
timelike unit Killing fields. When the cosmology constant vanishes, or in the Minkowski spacetime case, these two subsets have the same convex hull in the space of Killing fields. In presence
of the cosmology constant, the convex hull of  $ \mathcal{O}$ is properly contained in that of $\mathcal{S}$. This leads to two different notions of 
energy for an asymptotically AdS spacetime, the spinor energy and the observer energy. In \cite{Chrusciel-Maerten-Tod}, Chru\'sciel, Maerten and Tod proved the positivity of the spinor energy and derived important consequences among the related conserved quantities.  We show that the positivity of the
observer energy follows from the positivity of the spinor energy. A new notion called the ``rest mass" of an asymptotically AdS spacetime is then defined by minimizing the observer energy, and is shown to be evaluated in terms of the adjoint representation of the Lie algebra of Killing fields.  It is proved that the rest mass has the desirable rigidity property that characterizes the AdS spacetime. 
\end{abstract}
\thanks{P.-N. Chen is supported by NSF grant DMS-1308164, M.-T. Wang is supported by NSF grants DMS-1105483 and DMS-1405152,  and S.-T. Yau is supported by NSF
grants  PHY-0714648 and DMS-1308244. This work was partially supported by a grant from the Simons Foundation (\#305519 to Mu-Tao Wang). Part of this work was carried out
when P.-N. Chen and M.-T. Wang were visiting the Department of Mathematics and the Center of Mathematical Sciences and Applications at Harvard  University.} 
\maketitle
\section{Introduction}
In special relativity, the rest mass of a particle is defined to be the minimum value of energy seen among all observers, namely, the collection of all future timelike unit Killing vector fields in the Minkowski spacetime. While the notion of mass is a more challenging question in general relativity, the positivity of  the ADM mass \cite{ADM} was proved by Schoen-Yau \cite{Schoen-Yau1,Schoen-Yau2} and Witten \cite{Witten} for an asymptotically flat initial data set. 

\begin{theorem}\label{flat_pmt}
Let $(M, g, k)$ be an asymptotically flat initial data set that satisfies the dominant energy condition. The ADM mass of $(M, g, k)$ is non-negative. It vanishes if and only if $(M, g, k)$ is the data of a hypersurface in the Minkowski spacetime $\mathbb{R}^{3,1}$.
\end{theorem}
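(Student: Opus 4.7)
The plan is to follow Witten's spinor proof, which fits naturally with the subsequent focus of this paper on spinor Killing fields. On the initial data set $(M, g, k)$, one introduces a modified spin connection
\[
\widehat{\nabla}_X \psi = \nabla_X \psi + \tfrac{1}{2}\sum_i k(X, e_i)\, e_0 \cdot e_i \cdot \psi,
\]
where $\nabla$ is the induced Levi-Civita connection on spinors over $M$, $\{e_i\}$ is a local orthonormal frame, and $e_0$ is a future-directed timelike unit normal. The associated Dirac-Witten operator is $\widehat{D}\psi = \sum_i e_i \cdot \widehat{\nabla}_{e_i}\psi$.

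The key computation is a Weitzenbock-Lichnerowicz identity: integrating by parts on a large coordinate ball and passing to the limit produces
\[
\int_M \bigl(|\widehat{\nabla}\psi|^2 + \langle \mathcal{Q}(\mu, J)\psi, \psi\rangle\bigr)\, dv = \int_M |\widehat{D}\psi|^2\, dv + \mathcal{B}_\infty(\psi_0),
\]
where $\mu$ and $J$ denote the mass and current densities appearing in the constraint equations, $\mathcal{Q}(\mu, J)$ is the pointwise endomorphism of the spinor bundle that is non-negative precisely under the dominant energy condition $\mu \geq |J|$, and $\mathcal{B}_\infty(\psi_0)$ is a boundary term at infinity depending linearly on the ADM energy-momentum $(E, P)$ through the asymptotic value $\psi_0$ of $\psi$. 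Solving $\widehat{D}\psi = 0$ with prescribed constant asymptotic spinor $\psi_0$ in appropriate weighted Sobolev spaces, substituting into the identity, and varying $\psi_0$ among all constant spinors yields $E \geq |P|$, establishing non-negativity of the ADM mass.

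For the rigidity clause, if the ADM mass vanishes then the identity above forces $\widehat{\nabla}\psi \equiv 0$ for every solution, producing a maximal-dimensional space of $\widehat{\nabla}$-parallel spinors. Squaring these parallel spinors in the standard fashion yields a full complement of Killing vector fields on $(M, g, k)$, whose Killing development reconstructs a region of Minkowski spacetime containing $M$ as a spacelike hypersurface. I expect the principal obstacles to be: (i) the Fredholm and decay theory for $\widehat{D}$ in weighted spaces, which must be sharp enough for $\mathcal{B}_\infty(\psi_0)$ to be finite and to compute the ADM energy-momentum exactly; and (ii) the rigidity step, where one must patch together the Killing developments generated by an independent family of parallel spinors and verify that the resulting Lorentzian manifold is globally isometric to $\mathbb{R}^{3,1}$.
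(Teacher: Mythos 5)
The paper offers no proof of this theorem; it is quoted as the classical Schoen--Yau/Witten result with references, so your proposal can only be measured against Witten's argument itself. Your outline of the positivity half is the standard and correct one: the Dirac--Witten operator, the Weitzenb\"ock identity with the constraint-equation densities $(\mu, J)$ entering through an endomorphism that is non-negative under $\mu \geq |J|$, the weighted-space Fredholm theory for $\widehat{D}$, and the identification of the boundary term with $E|\psi_0|^2 + P_i\langle e_0\cdot e_i\cdot\psi_0,\psi_0\rangle$, whose range over unit constant spinors sweeps out the future null cone and gives $E \geq |P|$. The obstacles you flag in item (i) are real but routine in the literature.

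The genuine gap is in your rigidity step. The ADM mass is $\sqrt{E^2 - |P|^2}$, so its vanishing means $E = |P|$, not $E = 0$. When $|P| > 0$ the boundary term $E|\psi_0|^2 + P_i\langle e_0\cdot e_i\cdot\psi_0,\psi_0\rangle \geq (E - |P|)|\psi_0|^2 = 0$ vanishes only for $\psi_0$ in the two-dimensional eigenspace of $\frac{P_i}{|P|}\, e_0\cdot e_i\cdot$ with eigenvalue $-1$, so the Witten identity produces only a half-dimensional family of $\widehat{\nabla}$-parallel spinors, not the ``maximal-dimensional space'' you assert. Squaring these yields a parallel \emph{null} vector field, and from there one must still rule out the case $E = |P| > 0$ altogether (this is the content of the Beig--Chru\'sciel refinement, under decay hypotheses that are exactly the ``exact asymptotics'' the paper declines to emphasize) before the case $E = P = 0$ — where your argument does apply — can be invoked to produce a full complement of parallel spinors and the Killing development into $\mathbb{R}^{3,1}$. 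As written, your rigidity paragraph silently conflates these two cases and the harder one is left unaddressed.
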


We do not emphasize the exact asymptotics at infinity. The optimal result is found by Bartnik for the time symmetric case \cite{Bartnik} and by  Chru\'sciel for the general case \cite{Chrusciel}.

Witten's proof employs the spinor method and in particular it is shown that the energy associated with each asymptotically constant spinor is non-negative. 
The ADM mass, as the minimum of the energy, is thus non-negative.  An underlying algebraic fact of the proof is that the set of all constant spinors, through a quadratic relation, reproduces all future timelike null  translating Killing fields in $\R^{3,1}$. Out of the ten dimensional Killing fields of $\R^{3,1}$, the four dimensional subspace of translating Killing fields corresponds exactly to the notion of ADM energy and linear momentum, and the complement of which corresponds to the Lorentz algebra $\underline{so}(3,1)$.  

The situation of an asymptotically AdS spacetime is more complicated, where the space of Killing fields is the ten dimensional Lie algebra $\underline{so}(3,2)$.  The concept of energy in this case was first defined by Abbott-Deser in  \cite{Abbott-Deser}, according to which, ``It also appears likely that the recent proof by Witten\footnote{The recent proof by Witten refers to  Witten's proof of the positive energy theorem \cite{Witten}. } that the energy is positive for gravity with $\Lambda=0$ can be extended to the present case." In \cite{Chrusciel-Maerten-Tod} (see also \cite{Abbott-Deser},  \cite{Ashtekar-Magnon},  \cite{Gibbons-Hull-Warner}, \cite{Henneaux-T}, \cite{Maerten}, \cite{Wang-Xie-Zhang}), Chru\'sciel, Maerten and Tod generalized Witten's proof and proved a positive energy theorem for an asymptotically AdS
initial data set. In particular, each asymptotic imaginary Killing spinor defines a non-negative energy. They also identified 10 physical quantities $(e, \vec{p}, \vec{c}, \vec{j})$ from the boundary terms of Witten's formula as energy, linear momentum, center of mass, and angular momentum. These conserved quantities determine a quadratic form $Q$ \cite[page 11]{Chrusciel-Maerten-Tod} on $\mathbb{C}^4$. The positive energy theorem states that $Q$ is positive semi-definite. An important consequence of their positive energy theorem is a set of inequalities among these conserved quantities.  They also proved rigidity theorems under various conditions involving the vanishing of the energy associated with asymptotic Killing spinors. Before \cite{Chrusciel-Maerten-Tod}, Gibbons-Hull-Warner in \cite{Gibbons-Hull-Warner} derived inequalities among $(e, \vec{p}, \vec{c}, \vec{j})$ under the stronger assumption that $Q$ is strictly positive, or there is no Killing spinor.

A natural question is how the energy defined by spinors can be interpreted as the energy seen by observers. In the asymptotically flat case, the ADM energy momentum $(\bar{e}, \bar{p}_j)$ can be considered as an element in the dual space of $\R^{3,1}$,  and the pairing with a future timelike unit  Killing field gives the energy seen by the corresponding observer. The ADM mass, $\sqrt{\bar{e}^2-\sum_j \bar{p}_j^2}$, is the minimum energy seen among all observers and characterizes the rigidity property. In attempt to generalize the quasi-local mass defined in \cite{Wang-Yau2, Wang-Yau3} from the Minkowski reference to the AdS reference, such an interpretation of energy \cite{Wang} appears to be crucial.  In this article, we show that the above picture holds true even in the asymptotically AdS case, if the ADM mass is replaced by the {\it rest mass} defined below.

For $\kappa\geq 0$, we recall the metric on the four dimensional AdS spacetime,  $AdS^{3,1}$:
\begin{align*}
-(1+\kappa r^2) {d}t^2+\frac{1}{(1+\kappa r^2)} {d}r^2+r^2({d}\theta^2+\sin^2\theta d\phi^2) .
\end{align*}

We denote the corresponding Lorentzian product by $\langle \cdot, \cdot\rangle_{AdS^{3,1}}$. 
Most previous discussions of asymptotically AdS energy \cite{CCS,Chrusciel-Maerten-Tod,Maerten,Xie-Zhang} are with respect to 
$\frac{\partial}{\partial t}$, a future timelike translating Killing field. However, from an invariant point of view, any future Killing field in the orbit of $\frac{\partial}{\partial t}$ under the $SO(3,2)$ action should be included
in the consideration of energy. These Killing fields can be characterized as follows.

\begin{definition}\label{observer_Killing}
A Killing field $K$ of $AdS^{3,1}$ is said to be an observer Killing field if $K$ is hypersurface orthogonal, future timelike, and ``unit" in the sense that 
the minimum of $-\langle K, K\rangle_{AdS^{3,1}}$ is $1$. 
The subset of all observer Killing fields of $AdS^{3,1}$ is denoted by $\mathcal{O}$. 
\end{definition}

It is clear that $-\langle \frac{\partial}{\partial t}, \frac{\partial}{\partial t}\rangle_{AdS^{3,1}}=\frac{1}{1+\kappa r^2}$ and $\frac{\partial}{\partial t}$ is ``unit" in the above sense. We shall see that the minimum of $-\langle K, K\rangle_{AdS^{3,1}}$
is exactly the norm square of $K$ measured by the Killing form on the Lie algebra of Killing fields, thus justifying the nomenclature ``unit".

On the other hand, an imaginary Killing spinor (see Definition \ref{Killing_spinor}) on $AdS^{3,1}$ gives rise to a Killing field: 

\begin{definition} \label{spinor_Killing} A Killing field $K$ of $AdS^{3,1}$ is said to be a spinor Killing field if there exists an imaginary 
Killing spinor $\psi$ such that 
\[\langle K, X\rangle_{AdS^{3,1}}=-(X.\psi,\psi)\] for any tangent vector $X$ of $AdS^{3,1}$, where $X.$ denote the Clifford multiplication by $X$.
The set of spinor Killing fields of $AdS^{3,1}$ is denoted by $\mathcal{S}$. 
\end{definition}

As the space of Killing fields is isomorphic to the Lie algebra $\underline{so}(3,2)$, both $\mathcal{K}$ and $\mathcal{O}$ are subsets $\underline{so}(3,2)$.
When $\kappa=0$ (the Minkowski spacetime), the boundary of $\mathcal{K}$ is $\mathcal{O}$, and they have the same convex hull. This is no longer true when $\kappa>0$ and we prove: 

\begin{theorem}\label{convex}
If $\kappa>0$, the convex hull of  $ \mathcal{O}$ in $\underline{so}(3,2)$ is properly contained in that of $\mathcal{S}$.
\end{theorem}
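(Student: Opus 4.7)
The plan is to realize $AdS^{3,1}$ as the quadric $\{x\in\mathbb{R}^{3,2}:\langle x,x\rangle_{3,2}=-1/\kappa\}$ in $\mathbb{R}^{3,2}$, so that the Killing algebra $\underline{so}(3,2)$ is canonically identified with the bivector space $\Lambda^{2}\mathbb{R}^{3,2}$ via $K_{\omega}(x)=\omega\cdot x$. In this picture $\partial/\partial t$ corresponds (up to a factor of $\sqrt{\kappa}$) to the decomposable bivector $e_{4}\wedge e_{5}$ generating rotation in the distinguished negative-definite two-plane. A direct computation identifies $\min_{x\in AdS^{3,1}}(-\langle K_{\omega},K_{\omega}\rangle)$ with the bivector norm $\langle\omega,\omega\rangle$ (reproducing the remark preceding the theorem), and hypersurface orthogonality of $K_{\omega}$ with decomposability $\omega\wedge\omega=0$ in $\Lambda^{4}\mathbb{R}^{3,2}\cong\mathbb{R}^{3,2}$. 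Consequently $\mathcal{O}$ is the $SO(3,2)$-orbit of $\sqrt{\kappa}\,e_{4}\wedge e_{5}$, a six-dimensional submanifold of decomposable future-oriented unit bivectors supported on negative-definite two-planes, lying entirely in the Pl\"ucker quadric $\{\omega\wedge\omega=0\}$.

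For $\mathcal{S}$ I would use the standard cone lift: imaginary Killing spinors on $AdS^{3,1}$ correspond to parallel spinors on $\mathbb{R}^{3,2}$, and the quadratic map $\psi\mapsto K_{\psi}$ becomes the $\Lambda^{2}$-part of the Clifford square $\psi\otimes\bar\psi$, which is $Spin(3,2)$-equivariant and controlled explicitly by Fierz identities. To prove $\text{conv}(\mathcal{O})\subseteq\text{conv}(\mathcal{S})$ it suffices by $SO(3,2)$-equivariance to exhibit $\partial/\partial t$ as a convex combination of elements of $\mathcal{S}$, and I would do so by averaging $K_{\psi}$ over the orbit of a suitable spinor $\psi_{0}$ under the compact stabilizer $H=SO(3)\times SO(2)$ of $e_{4}\wedge e_{5}$. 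A weight count on the three $H$-invariant pieces of $\underline{so}(3,2)$ (the $\Lambda^{2}$ of the positive three-plane, the $\Lambda^{2}$ of the negative two-plane, and the mixed tensor $\mathbb{R}^{3}\otimes\mathbb{R}^{2}$) shows that the $H$-invariant subspace is one-dimensional and spanned by $e_{4}\wedge e_{5}$, so the average is automatically a scalar multiple of $\partial/\partial t$. The scalar is strictly positive provided $\psi_{0}$ is chosen with positive $e_{4}\wedge e_{5}$-component of $K_{\psi_{0}}$, which is possible because at each point of $AdS^{3,1}$ the Dirac current covers the entire future causal cone of the tangent space.

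For strict inclusion I would exhibit one $K_{\psi}\in\mathcal{S}$ outside $\text{conv}(\mathcal{O})$, using the $SO(3,2)$-equivariant quartic map $\omega\mapsto\omega\wedge\omega\in\Lambda^{4}\mathbb{R}^{3,2}\cong\mathbb{R}^{3,2}$ as the separating device: it vanishes identically on $\mathcal{O}$, so $\text{conv}(\mathcal{O})$ satisfies a sharp invariant bound of the form $\langle\omega\wedge\omega,\omega\wedge\omega\rangle_{3,2}\leq C\,\langle\omega,\omega\rangle^{2}$, with constant $C$ extracted from the extreme rays of $\mathcal{O}$ by a Carath\'eodory-type argument. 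By contrast, Fierz analysis produces a generically non-vanishing ``magnetic'' four-form contribution to $\psi\otimes\bar\psi$, so that $K_{\psi}\wedge K_{\psi}\neq 0$ for typical $\psi$, and an appropriate rescaling forces the observer bound to fail. The main obstacle will be isolating the precise separating inequality: $\mathcal{O}$ is non-compact (negative two-planes degenerate toward null planes under boosts), so $\text{conv}(\mathcal{O})$ is unbounded, and pinning down the correct supporting hyperplane requires a careful analysis of the asymptotic geometry of $\mathcal{O}$ at the null-plane boundary of the $(-,-)$-Grassmannian.
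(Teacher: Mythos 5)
Your first half (containment of the hulls) is a legitimate alternative to the paper's argument and is in some ways cleaner: the paper instead brings an arbitrary element of $\mathcal{O}$ to a normal form and explicitly solves for a decomposition into elements of $\mathcal{S}$ using the Fierz-derived defining equations of $\mathcal{S}$, whereas your reduction to $\partial/\partial t$ via transitivity of $SO(3,2)$ on $\mathcal{O}$, followed by averaging over the stabilizer $SO(3)\times SO(2)$, avoids that computation (indeed, once reduced to $\partial/\partial t$ you do not even need the average: $\partial/\partial t=(1,\vec 0,\vec 0,\vec 0)$ is half the sum of $(1,(1,0,0),\vec 0,\vec 0)$ and $(1,(-1,0,0),\vec 0,\vec 0)$, both of which lie in $\mathcal{S}$). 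You do still owe the two facts your reduction rests on, namely that $\mathcal{O}$ is a single orbit and that $\mathcal{S}$ is $SO(3,2)$-invariant; both are true and are effectively established in Sections 3--4 of the paper.

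The second half, however, has a genuine gap. Separating a point from a convex hull requires a \emph{linear} functional, or at least a \emph{convex} region containing $\mathrm{conv}(\mathcal{O})$ and excluding the point. The quartic map $\omega\mapsto\omega\wedge\omega$ vanishes on $\mathcal{O}$ but is quadratic in $\omega$, so it does not vanish (and is not controlled) on convex combinations: $(\omega_1+\omega_2)\wedge(\omega_1+\omega_2)=2\,\omega_1\wedge\omega_2$ is generically nonzero even for decomposable $\omega_i$, and since $\mathcal{O}$ is unbounded these cross terms are unbounded as well. The proposed bound $\langle\omega\wedge\omega,\omega\wedge\omega\rangle\le C\langle\omega,\omega\rangle^2$ on $\mathrm{conv}(\mathcal{O})$ is therefore not a consequence of $\omega\wedge\omega=0$ on $\mathcal{O}$; the sublevel set of this quartic is not convex, so verifying the bound would require checking it on all convex combinations directly, which is essentially the original problem. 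You acknowledge that ``isolating the precise separating inequality'' is the main obstacle, but that obstacle is the entire content of this half of the theorem, so the proposal does not constitute a proof. The paper's device is genuinely linear: for a fixed unit vector $\vec{x}_0\in\R^3$ the map $(A,\vec{B},\vec{C},\vec{D})\mapsto(A,\vec{B}\cdot\vec{x}_0,\vec{C}\cdot\vec{x}_0,\vec{D}\cdot\vec{x}_0)$ sends $\mathcal{O}$ into the closed future causal cone of $\R^{3,1}$ (Lemma 5.1), hence sends $\mathrm{conv}(\mathcal{O})$ there too; a spinor Killing field whose image is null with $\vec{D}\cdot\vec{x}_0\neq 0$ (e.g.\ $(3,(-1,0,0),(-2,0,0),(2,0,0))$ with $\vec{x}_0=(1,0,0)$) then cannot be a convex combination, because null vectors are extreme points of the causal cone and the equality case of Lemma 5.1 forces $\vec{D}_j\cdot\vec{x}_0=0$ for each summand. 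To repair your argument you would need to replace the quartic invariant by a separating functional of this linear type.
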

Our approach is to express the two subsets in terms of a special coordinate system on $\underline{so}(3,2)$ and study their defining equations. 
For simplicity, we assume $\kappa=1$ in the rest of the paper. 

Combining Theorem \ref{convex} and the positive energy theorem for asymptotically AdS spacetimes  proved by Chru\'sciel, Maerten and Tod in \cite{Chrusciel-Maerten-Tod}, we conclude that the energy seen by an observer (the observer energy) in  $ \mathcal{O}$ is non-negative. The rest mass $m$ of an asymptotically AdS spacetime is then defined by minimizing the observer energy (see Definition \ref{rest_mass}).  We show that the rest mass has the desirable rigidity property that characterizes the AdS spacetime. The following is the AdS version of Theorem \ref{flat_pmt}.

\begin{theorem}\label{pmt}
Let $(M, g, k)$ be an asymptotically AdS initial data set that satisfies the dominant energy condition as in \cite{Chrusciel-Maerten-Tod} . The rest mass of $(M, g, k)$ is non-negative. It vanishes if and only if $(M, g, k)$  is the data of a hypersurface in Anti-de Sitter spacetime $AdS^{3,1}$.
\end{theorem}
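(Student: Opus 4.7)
The plan is to derive both statements from the Chru\'sciel--Maerten--Tod positive energy theorem via Theorem \ref{convex}, using the linearity of the energy. The energy is a well-defined linear functional $E: \underline{so}(3,2) \to \mathbb{R}$ on the Killing Lie algebra whose restriction to a spinor Killing field $S_\psi \in \mathcal{S}$ equals the value $Q(\psi)$ of the Chru\'sciel--Maerten--Tod quadratic form. Since $Q \geq 0$ by \cite{Chrusciel-Maerten-Tod}, linearity gives $E \geq 0$ on the convex hull of $\mathcal{S}$, and Theorem \ref{convex} places $\mathcal{O}$ inside this hull. Hence the observer energy is non-negative on $\mathcal{O}$, and the rest mass $m = \inf_{K \in \mathcal{O}} E(K)$ is non-negative.

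For the rigidity, suppose $m = 0$. I would first produce a Killing field $K^* \in \overline{\mathcal{O}}$ with $E(K^*) = 0$, either as an attained minimizer of $E$ on $\mathcal{O}$ or as the limit of a minimizing sequence. Using Theorem \ref{convex}, write $K^* = \sum_i \lambda_i S_i$ as a convex combination with $S_i \in \mathcal{S}$ and $\lambda_i > 0$. The identity $0 = E(K^*) = \sum_i \lambda_i E(S_i)$ together with $E(S_i) \geq 0$ forces $E(S_i) = 0$ for every $i$.

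The crux is to upgrade this partial vanishing to $E \equiv 0$ on all of $\underline{so}(3,2)$. For this I would verify, using the explicit defining equations of $\mathcal{O}$ and $\mathcal{S}$ supplied in the proof of Theorem \ref{convex}, that every observer Killing field actually lies in the \emph{interior} of $\text{conv}(\mathcal{S})$, the AdS analogue of the fact that observer worldlines lie strictly inside the future light cone in the Minkowski setting. Granted this, for any $V \in \underline{so}(3,2)$ and all sufficiently small $t$, $K^* + tV \in \text{conv}(\mathcal{S})$, so $E(K^* + tV) = tE(V) \geq 0$; letting $t$ range over both signs forces $E(V) = 0$. Thus $Q$ vanishes on every imaginary Killing spinor, and the rigidity statement of \cite{Chrusciel-Maerten-Tod} identifies $(M,g,k)$ as the induced data of a hypersurface in $AdS^{3,1}$.

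The main obstacle is the interior assertion. Theorem \ref{convex} only records proper containment of convex hulls, so promoting this to containment in the topological interior demands a direct analysis of the hypersurfaces cutting out $\mathcal{O}$ and $\mathcal{S}$ in the coordinates on $\underline{so}(3,2)$ used in the proof of Theorem \ref{convex}, and a verification that the defining inequalities for $\text{conv}(\mathcal{S})$ hold strictly on $\mathcal{O}$. A secondary technical point is attainment of the infimum: since $\mathcal{O}$ is a non-compact $SO(3,2)$-orbit, a minimizing sequence must be shown to sub-converge in $\overline{\mathcal{O}}$ to a configuration where the interior argument still applies, with the Killing-form normalization in Definition \ref{observer_Killing} providing the required control on one component.
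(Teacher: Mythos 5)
Your positivity argument is exactly the paper's: energy is linear on $\underline{so}(3,2)$, non-negative on $\mathcal{S}$ by Chru\'sciel--Maerten--Tod, hence non-negative on $\mathrm{conv}(\mathcal{S})\supset\mathcal{O}$ by Theorem \ref{convex}. Your rigidity argument, however, takes a different route and has a genuine gap at the step you call ``a secondary technical point'': the attainment of the infimum. The set $\mathcal{O}$ is already closed (the strict inequalities in Proposition \ref{observer_constraint} are forced by the equality constraints), so $\overline{\mathcal{O}}=\mathcal{O}$, and in precisely the borderline configurations that rigidity must rule out, \emph{no} observer Killing field attains zero energy. Concretely, suppose $\vec{p}=\vec{j}=0$ and $e=|\vec{c}|>0$; then for every $(A,\vec{B},\vec{C},\vec{D})\in\mathcal{O}$ one has $Ae+\vec{C}\cdot\vec{c}\geq (A-|\vec{C}|)e>0$, yet the infimum is $0$. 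A minimizing sequence escapes to infinity on the non-compact orbit $\mathcal{O}$, the Killing-form normalization does not prevent this (think of $(\cosh s,\sinh s,0,0)$ in the Minkowski analogue with $\bar e=|\bar p|$), and the rescaled limit is a Killing-norm-zero direction lying on the \emph{boundary} of $\mathrm{conv}(\mathcal{S})$ --- exactly where your interior argument no longer applies. At best such a limit produces a single vanishing spinor energy, whereas the rigidity statement of \cite{Chrusciel-Maerten-Tod} as used here requires either two independent Killing spinors, or the null condition on $(e,\vec p)$, or the saturation condition with $\vec c\times\vec j=0$. (Your interior claim itself is fine and provable: a supporting functional of $\mathrm{conv}(\mathcal{S})$ at $(1,\vec 0,\vec 0,\vec 0)$ corresponds to a positive semi-definite traceless Hermitian matrix, hence vanishes, and one transports this to all of $\mathcal{O}$ by $SO(3,2)$-equivariance.)

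The paper avoids the attainment problem entirely by first computing the rest mass in closed form, $m^2=\tfrac12(\alpha+\sqrt{\beta})$ with $\alpha,\beta$ as in \eqref{alpha_beta}. After boosting to $\vec p=0$ (the case $e=|\vec p|$ being rigidity condition (2) of Theorem \ref{rigidity_Chrusciel}), $m=0$ combined with the inequality $e\geq\sqrt{|\vec c|^2+|\vec j|^2+2|\vec c\times\vec j|}$ forces $\vec j=0$ and $e=|\vec c|$, which is a special case of rigidity condition (3). To repair your argument you would have to classify the asymptotic directions of minimizing sequences in $\mathcal{O}$ and show that vanishing energy along each such boundary direction triggers one of the CMT rigidity cases --- which amounts to redoing the paper's case analysis in disguise.
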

\begin{remark}
In $\R^{3,1}$, the set of observer Killing fields consists of unit future directed timelike vectors (or the interior of the future light cone in $\R^{3,1}$) and the set of spinor Killing fields consists of future directed null vectors (or the future light
cone in $\R^{3,1}$). It is clear that the convex hull of the former is contained in the convex hull of the latter. For an asymptotically flat initial data set, the minimum energy  associated with the observer Killing fields is the ADM mass $\sqrt{\bar{e}^2-\sum_j \bar{p}_j^2}$ while the minimum energy associated with the asymptotically constant spinors of unit length is $\bar e-|\bar p|$. Both quantities characterize the rigidity property of $\R^{3,1}$. For $AdS^{3,1}$, the sets of observer Killing fields and spinor Killing fields are described in Section 3 and 4, respectively. For an asymptotically AdS initial data set, the minimum energy associated with the observer Killing fields is the rest mass, which characterizes the AdS spacetime, see Theorem 1.5. The minimum energy associated with the asymptotically Killing spinors is computed in \cite{Wang-Xie-Zhang}, which does NOT  characterize the AdS spacetime.
\end{remark}
Theorem \ref{pmt} is proved at the end of Section 6. 
We remark that the positive mass theorem in  the time symmetric case ($k=0$), which corresponds to an asymptotically hyperbolic Riemannian manifold, has been considered by many authors \cite{Andersson-Cai-Galloway,Chrusciel-Herzlich,Chrusciel-Jezierski,XWang,Zhang}, who made use of sections of the complex rank 2 spinor bundle on the 3-dimensional hyperbolic space.  Chru\'sciel, Maerten and Tod in \cite{Chrusciel-Maerten-Tod} considered the full spacetime
version of the complex rank 4 spinor bundle on $AdS^{3,1}$.

\section{AdS spacetime} 
We review the AdS spacetime in different coordinate systems in this section. 
Take $\mathbb{R}^{3,2}$ with the coordinate system $(y^0, y^1, y^2, y^3, y^3)$ and the metric \[-(dy^4)^2+\sum_{i=1}^3(dy^i)^2-(dy^0)^2.\] $AdS^{3,1}$ can be identified with the timelike hypersurface
given by $-(y^4)^2+\sum_{i=1}^3(y^i)^2-(y^0)^2=-1$. Note that the group $SO(3,2)$ leaves this hypersurface invariant and thus the isometry group of $AdS^{3,1}$ is $SO(3,2)$, which is $10$ dimensional.

Consider the following parametrization of $AdS^{3,1}$:
\begin{align*}
y^0&=\sqrt{1+r^2}\sin t\\
y^1&=r\sin\theta\sin\phi\\
y^2&=r \sin\theta \cos\phi\\
y^3&=r \cos\theta\\
y^4&=\sqrt{1+r^2}\cos t.
\end{align*}
This gives the static chart of $AdS^{3,1}$
\begin{align*}
-(1+r^2) {d}t^2+\frac{1}{(1+r^2)} {d}r^2+r^2(d\theta^2+\sin^2\theta d\phi^2).
\end{align*}

In order to interpret the conserved quantities corresponding Killing fields, we consider a conformal coordinate system $x^0, x^1, x^2, x^3$ such that 
\[y^0=-\frac{x^0}{u}, y^i=-\frac{x^i}{u}, y^4=\frac{2}{u}+1\] with \[u=\frac{1}{4}\left[\sum_{k=1}^3(x^k)^2-(x^0)^2\right]-1.\]

We check that \[-(y^4)^2+\sum_{i=1}^3(y^i)^2-(y^0)^2=-(\frac{2}{u}+1)^2+\sum_{i=1}^3\frac{(x^i)^2}{u^2}-\frac{(x^0)^2}{u^2}=-1.\]

The induced metric on $AdS^{3,1}$ in the $x^0, x^1, x^2, x^3$ coordinate system is
\[\frac{1}{u^2}[-(dx^0)^2+\sum_{i} (dx^i)^2].\]

This is conformal to the Minkowski metric on $\mathbb{R}^{3,1}$. We deduce that $AdS^{3,1}$ has $10$ dimensional Killing fields and $15$ dimensional conformal Killing fields, exactly the same as $\mathbb{R}^{3,1}$.

In the $x^0, x^1, x^2, x^3$ coordinate system, $y^0\frac{\partial}{\partial y^i}+y^i\frac{\partial}{\partial y^0}$ corresponds to $x^0\frac{\partial}{\partial x^i}+x^i\frac{\partial}{\partial x^0}$, which is a 
boost Killing fields on $\mathbb{R}^{3,1}$.  
Indeed, we can check directly that 
\[y^k\frac{\partial}{\partial y^0}+y^0\frac{\partial}{\partial y^k}=x^k\frac{\partial}{\partial x^0}+x^0\frac{\partial}{\partial x^k}, k=1, 2, 3\] using $\frac{\partial u}{\partial x^0}=-\frac{1}{2} x^0$ and $\frac{\partial u}{\partial x^i}=\frac{1}{2} x^i$.

The rotation Killing fields $y^i\frac{\partial}{\partial y^j}-y^j\frac{\partial}{\partial y^i}, i<j$ correspond to $x^i\frac{\partial}{\partial x^j}-x^j\frac{\partial}{\partial x^i}$.

The time translating Killing field
\[y^0\frac{\partial}{\partial y^4}-y^4\frac{\partial}{\partial y^0}\]  corresponds to 
\[\frac{1}{4}\left([-(x^0)^2+\sum(x^k)^2]\frac{\partial}{\partial x^0}+2x^0(x^0\frac{\partial}{\partial x^0}+x^l\frac{\partial}{\partial x^l})\right)+\frac{\partial}{\partial x^0},\] the first summand corresponds to a conformal Killing field on $\mathbb{R}^{3,1}$. 

Finally, the second set of boost Killing fields
\[y^4\frac{\partial}{\partial y^i}+y^i\frac{\partial}{\partial y^4}, i=1, 2, 3,\] corresponds to 

\[\frac{1}{4}\left(-[-(x^0)^2+\sum(x^k)^2]\frac{\partial}{\partial x^i}+2x^i(x^0\frac{\partial}{\partial x^0}+x^l\frac{\partial}{\partial x^l})\right)-\frac{\partial}{\partial x^i}, i=1, 2, 3.\]

\section{Observer Killing fields of $AdS^{3,1}$}
Recall from the last section, a Killing  field of $AdS^{3,1}$ (or $\R^{3,2}$) can be written as 
\begin{equation}\label{Killing}
K=  A (y^0\frac{\partial}{\partial y^4}-y^4\frac{\partial}{\partial y^0} )+ B_i( y^0\frac{\partial}{\partial y^i}+y^i\frac{\partial}{\partial y^0})+ C_j (y^4\frac{\partial}{\partial y^j}+y^j\frac{\partial}{\partial y^4})+ D_p \epsilon_{pqr}y^q\frac{\partial}{\partial y^r}.  \end{equation}
For simplicity, we will write $K=(A,\vec{B},\vec{C},\vec{D})$ and consider $\vec{B}, \vec{C}$, and $\vec{D}$ as vectors in $\mathbb{R}^3$ with the usual 
inner and cross products. In particular, the time translating Killing field $\frac{\partial}{\partial t}$ is simply 
\[ \frac{\partial}{\partial t}= y^0\frac{\partial}{\partial y^4}-y^4\frac{\partial}{\partial y^0} \] and $(A, \vec{B}, \vec{C}, \vec{D})=(1, \vec{0}, \vec{0}, \vec{0})$.  Finally, let $\epsilon_{ijk}$ be the Levi-Civita symbol of $\R^3$.

In this section,we derive the condition on $A$, $\vec{B}$, $\vec{C}$ and $\vec{D}$ so that $K=(A,\vec{B},\vec{C},\vec{D})$  is an observer Killing field
defined in Definition \ref{observer_Killing}.

\begin{lemma}
The Killing field $K$ of the form \eqref{Killing}
is hypersurface orthogonal if and only if 
\[ \vec{B}\times \vec{C}=- A\vec{D}.\]
\end{lemma}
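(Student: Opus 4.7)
The plan is to apply Frobenius's theorem, which says $K$ is hypersurface orthogonal on $AdS^{3,1}$ if and only if the 3-form $K^\flat\wedge dK^\flat$ vanishes when pulled back to the hypersurface. I would carry out the computation in the ambient $\mathbb{R}^{3,2}$ and then analyze its restriction.

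The first task is to write out $K^\flat$ and $dK^\flat$ directly from the formula \eqref{Killing}. Using $\eta=\mathrm{diag}(-1,1,1,1,-1)$, one finds
\[ K^\flat=(Ay^4-\vec{B}\cdot\vec{y})\,dy^0+(B_iy^0+C_iy^4+(\vec{D}\times\vec{y})_i)\,dy^i+(-Ay^0-\vec{C}\cdot\vec{y})\,dy^4, \]
and after taking the exterior derivative,
\[ dK^\flat=2A\,dy^4\wedge dy^0+2B_i\,dy^0\wedge dy^i+2C_j\,dy^4\wedge dy^j+\epsilon_{pqr}D_p\,dy^q\wedge dy^r. \]
I would then wedge them and organize the resulting 3-form by the occurrences of $dy^0$ and $dy^4$. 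The cleanest piece is the coefficient of $dy^0\wedge dy^4\wedge dy^m$: after applying the vector triple product identity $\vec{y}\times(\vec{B}\times\vec{C})=\vec{B}(\vec{y}\cdot\vec{C})-\vec{C}(\vec{y}\cdot\vec{B})$, these coefficients collapse to $\bigl[\vec{y}\times(\vec{B}\times\vec{C}+A\vec{D})\bigr]_m$. Since $\vec{y}\in\mathbb{R}^3$ runs over an open set on $AdS^{3,1}$, the vanishing of this family of coefficients is equivalent to the vector condition $\vec{B}\times\vec{C}=-A\vec{D}$, which gives the necessity of the stated relation.

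For sufficiency I would substitute $\vec{B}\times\vec{C}=-A\vec{D}$ back and verify that the remaining components of $K^\flat\wedge dK^\flat$ lie in the ideal generated by the 1-form $N^\flat=-y^0dy^0+y^idy^i-y^4dy^4$, which is proportional to the differential of the defining function and hence pulls back to zero on $AdS^{3,1}$. The auxiliary identity $\vec{B}\times(\vec{D}\times\vec{y})=\vec{D}(\vec{B}\cdot\vec{y})-\vec{y}(\vec{B}\cdot\vec{D})$ and its $\vec{C}$-analogue, together with the constraint $-(y^0)^2+|\vec{y}|^2-(y^4)^2=-1$, are the main tools. I expect the bookkeeping here to be the principal obstacle: the residual $dy^0\wedge dy^i\wedge dy^j$ and $dy^4\wedge dy^i\wedge dy^j$ components naively produce terms proportional to $\vec{B}\cdot\vec{D}$ and $\vec{C}\cdot\vec{D}$, and recombining them as $N^\flat\wedge\lambda$ for a suitable 2-form $\lambda$ requires nontrivial use of the $AdS$ relation. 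A cleaner alternative would be to pull back into the frame $\{dy^0,dy^1,dy^2,dy^3\}$ on the chart $\{y^4\neq 0\}$ by substituting $dy^4=\frac{1}{y^4}(-y^0dy^0+y^idy^i)$, and to check directly that the four coefficients of the reduced 3-form vanish under the hypothesis.
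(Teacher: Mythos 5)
Your proposal follows the paper's proof essentially verbatim: both apply the Frobenius theorem and expand $\alpha\wedge d\alpha$ in the ambient coordinates $y^0,\dots,y^4$, and your coefficient of $dy^0\wedge dy^4\wedge dy^m$, namely $2\bigl[\vec{y}\times(\vec{B}\times\vec{C}+A\vec{D})\bigr]_m$, is exactly the paper's condition \eqref{orthogonal_1} contracted with $\vec{y}$. The only difference is that your sufficiency step is more elaborate than necessary: once $\vec{B}\times\vec{C}=-A\vec{D}$ holds, the remaining components of the ambient three-form vanish identically in $\mathbb{R}^{3,2}$ (they reduce to $\vec{B}\cdot\vec{D}=0$ and $\vec{C}\cdot\vec{D}=0$, both consequences of the hypothesis), so no appeal to the ideal generated by $N^\flat$ or to the defining equation of $AdS^{3,1}$ is actually required.
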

\begin{proof}
Let $\alpha$ be a one-form dual to the Killing field $K$. Thus
\begin{equation}\label{alpha}
\alpha=A(-y^0dy^4+y^4dy^0)+B_i(y^0 dy^i-y^idy^0)+C_j(y^4dy^j-y^jdy^4)+D_p \epsilon_{pqr} y^q dy^r.
\end{equation}

By the Frobenius theorem, $K$ is hypersurface  orthogonal if and only if  \begin{equation} \alpha\wedge d\alpha=0. \end{equation} 

We compute
\begin{equation}\label{d_alpha} d\alpha=-2A dy^0\wedge dy^4+2B_idy^0\wedge dy^i+2C_j dy^4\wedge dy^j+D_p\epsilon_{pqr} dy^q\wedge dy^r\end{equation} and rewrite
\[\alpha=(Ay^4-B_i y^i) dy^0+(-Ay^0-C_j y^j) dy^4+(B_i y^0+C_i y^4+D_k\epsilon_{kji} y^j) dy^i.\]

Expanding $\alpha\wedge d\alpha$ and collecting terms, we derive
\[\begin{split}\alpha\wedge d\alpha=& 2(-B_i C_j+B_j C_i-AD_k \epsilon_{kij} ) y^idy^0\wedge dy^4\wedge dy^j\\
&+[(Ay^4-B_l y^l) D_k\epsilon_{kij}-(C_i y^4+D_k \epsilon_{kli} y^l)(2B_j)] dy^0\wedge dy^i \wedge dy^j\\
&+[(-Ay^0-C_l y^l) D_k\epsilon_{kij}-(B_i y^0+D_k \epsilon_{kli} y^l)(2C_j)] dy^4\wedge dy^i \wedge dy^j\\
&+(B_i y^0+C_i y^4)(D_p\epsilon_{pqr}) dy^i \wedge dy^q\wedge dy^r.
\end{split}\]
As a result, $\alpha\wedge d\alpha=0$ is equivalent to the following set of conditions:
\begin{align}
\label{orthogonal_1}-B_i C_j+B_j C_i-AD_k \epsilon_{kij}  = & 0\\
\label{orthogonal_2} (Ay^4-B_l y^l) D_k\epsilon_{kij}-(C_i y^4+D_k \epsilon_{kli} y^l)B_j + (C_j y^4+D_k \epsilon_{klj} y^l)B_i =& 0\\
\label{orthogonal_3}   (Ay^0+C_l y^l) D_k\epsilon_{kij}+(B_i y^0+D_k \epsilon_{kli} y^l)C_j -(B_j y^0+D_k \epsilon_{klj} y^l)C_i   = & 0\\
\label{orthogonal_4} (B_i y^0+C_i y^4)(D_p\epsilon_{pqr}) \epsilon_{iqr} =& 0.
\end{align}
Equation \eqref{orthogonal_1} is the same as $\vec{B}\times \vec{C}=-A\vec{D}$. For equation \eqref{orthogonal_2}, the coefficient for $y^4$ also vanishes if  $\vec{B}\times \vec{C}=-A\vec{D}$. The coefficient for $y^l$ vanishes if 
\[ B_l D_k\epsilon_{kij} + D_kB_j \epsilon_{kli} -  D_kB_i \epsilon_{klj}=0. \]
It suffices to check this condition for $i=1$, $j=2$ and $l=1,2,3$. For this pair of $i$ and $j$, the equality holds trivially for $l=1$ and $l=2$. For $l=3$, the equation reduces to $\vec{B} \cdot \vec{D}=0$ which is also implied by  $\vec{B}\times \vec{C}=-A\vec{D}$. 

Similarly, equation \eqref{orthogonal_3} follows from  $\vec{B}\times \vec{C}=-A\vec{D}$.  Lastly,  equation \eqref{orthogonal_4} is the same as $\vec{B} \cdot \vec{D}=0$ and $\vec{C} \cdot \vec{D}=0$ and both follow from  $\vec{B}\times \vec{C}=-A\vec{D}$. 
\end{proof}
In the next lemma, we find the condition on $A$, $\vec{B}$, $\vec{C}$ and $\vec{D}$ so that $K$ is everywhere timelike.
\begin{lemma}\label{lemma_timelike}
Let $K$ be a hypersurface orthogonal Killing field of the form \eqref{Killing}
Then $K$ is timelike in $AdS^{3,1}$ if and only if 
\begin{equation}\label{positive_condition}
\begin{split}
|A| > & \max \{ |\vec{B}|, |\vec{C}|, |\vec{D}| \} \\
A^2 +  |\vec{D}|^2 > & |\vec{B}|^2+|\vec{C}|^2.
\end{split}
\end{equation}
Moreover, for such a hypersurface orthogonal and timelike Killing field, we have 
\[  \min  (- \langle K, K\rangle_{AdS^{3,1}}) =A^2 +  |\vec{D}|^2  -  |\vec{B}|^2- |\vec{C}|^2.  \]

\end{lemma}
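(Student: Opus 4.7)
My plan is to reduce $-\langle K,K\rangle_{AdS^{3,1}}$ to a two-dimensional quadratic form by an algebraic substitution. Setting $X = Ay^0 + \vec{C}\cdot\vec{y}$ and $Y = Ay^4 - \vec{B}\cdot\vec{y}$, direct expansion gives
\[
-\langle K,K\rangle_{AdS^{3,1}} = X^2 + Y^2 - |y^0\vec{B} + y^4\vec{C} + \vec{D}\times\vec{y}|^2.
\]
The decisive identity is
\[
X\vec{B} + Y\vec{C} = A\bigl(y^0\vec{B} + y^4\vec{C} + \vec{D}\times\vec{y}\bigr),
\]
which follows from the triple-product identity $\vec{y}\times(\vec{B}\times\vec{C}) = (\vec{C}\cdot\vec{y})\vec{B} - (\vec{B}\cdot\vec{y})\vec{C}$ combined with the hypersurface orthogonality $\vec{B}\times\vec{C} = -A\vec{D}$. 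For $A\ne 0$ (the case $A=0$ is disposed of at once: evaluating $K$ at suitable points of $AdS^{3,1}$ produces a non-timelike vector), this collapses $-\langle K,K\rangle_{AdS^{3,1}}$ to the closed form
\[
-\langle K,K\rangle_{AdS^{3,1}} = \frac{1}{A^2}\bigl[(A^2-|\vec{B}|^2)X^2 - 2(\vec{B}\cdot\vec{C})XY + (A^2-|\vec{C}|^2)Y^2\bigr].
\]

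I would then read off the timelikeness condition from positive definiteness of this $2\times 2$ form. Using $|\vec{B}\times\vec{C}|^2 = A^2|\vec{D}|^2$, its determinant equals $(A^2+|\vec{D}|^2-|\vec{B}|^2-|\vec{C}|^2)/A^2$. By Sylvester's criterion, positive definiteness is equivalent to $|A|>|\vec{B}|$ together with $A^2+|\vec{D}|^2>|\vec{B}|^2+|\vec{C}|^2$. These in turn force $|A|>|\vec{C}|$ (positivity of the other diagonal entry) and $|A|>|\vec{D}|$ (the product of eigenvalues $A^2|\vec{D}|^2$ must lie below $A^4$), so positive definiteness is exactly \eqref{positive_condition}. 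Conversely, if \eqref{positive_condition} fails, the form is not positive definite; at $\vec{y}=0$ the map $(y^0,y^4)\mapsto(X,Y) = A(y^0,y^4)$ sweeps the circle of radius $|A|$, on which the form attains a nonpositive value, exhibiting a point of $AdS^{3,1}$ where $K$ is not timelike.

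To extract the minimum I would apply Lagrange multipliers on the constraint $(y^0)^2+(y^4)^2-|\vec{y}|^2 = 1$. With $\alpha := (A^2-|\vec{B}|^2)X - (\vec{B}\cdot\vec{C})Y$ and $\beta := (A^2-|\vec{C}|^2)Y - (\vec{B}\cdot\vec{C})X$, the stationarity system reduces to $\alpha = A\nu y^0$, $\beta = A\nu y^4$, and $\vec{C}\alpha - \vec{B}\beta = -A^2\nu\vec{y}$, where $\nu$ is the Lagrange multiplier. The last equation forces $\vec{y} = (y^4\vec{B}-y^0\vec{C})/A$; substituting back into $X,Y$ and then into $\alpha$, the cross terms telescope and leave
\[
\alpha = \frac{y^0}{A}\bigl[(A^2-|\vec{B}|^2)(A^2-|\vec{C}|^2) - (\vec{B}\cdot\vec{C})^2\bigr] = Ay^0\bigl(A^2+|\vec{D}|^2-|\vec{B}|^2-|\vec{C}|^2\bigr).
\]
Comparing with $\alpha = A\nu y^0$ yields $\nu = A^2+|\vec{D}|^2-|\vec{B}|^2-|\vec{C}|^2$ at every critical point, and the companion identity $y^0 X + y^4 Y = A$, obtained by plugging $\vec{y}=(y^4\vec{B}-y^0\vec{C})/A$ into the AdS constraint, then gives $-\langle K,K\rangle_{AdS^{3,1}} = \nu$ at such a point. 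Under \eqref{positive_condition} the quadratic form is positive definite and $-\langle K,K\rangle_{AdS^{3,1}}\to+\infty$ as $p\to\infty$ on $AdS^{3,1}$, so the minimum is attained and equals the common critical value $A^2+|\vec{D}|^2-|\vec{B}|^2-|\vec{C}|^2$.

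The main obstacle I expect is the collapse of $\alpha$ to $Ay^0$ times the Killing-form invariant: the naive substitution produces a quartic polynomial in the components of $\vec{B},\vec{C},y^0,y^4$, and the hypersurface orthogonality identity $|\vec{B}\times\vec{C}|^2 = A^2|\vec{D}|^2$ is precisely what is needed to cancel all the mixed terms and leave only the invariant.
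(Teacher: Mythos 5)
Your reduction is correct and takes a genuinely different route from the paper: the paper first normalizes $K$ by an $SO(3)$ rotation and a boost to the special form $(A,(B,0,0),(0,C,0),(0,0,D))$ with $AD+BC=0$ and then completes a square, whereas you work invariantly at a general point, using $\vec{B}\times\vec{C}=-A\vec{D}$ to collapse $-\langle K,K\rangle_{AdS^{3,1}}$ to the binary form $\frac{1}{A^2}\bigl[(A^2-|\vec{B}|^2)X^2-2(\vec{B}\cdot\vec{C})XY+(A^2-|\vec{C}|^2)Y^2\bigr]$. I have checked the identity $X\vec{B}+Y\vec{C}=A\,(y^0\vec{B}+y^4\vec{C}+\vec{D}\times\vec{y})$, the determinant computation, the equivalence of positive definiteness with \eqref{positive_condition}, the converse via the circle $\vec{y}=0$, and the Lagrange computation giving the common critical value $\nu=A^2+|\vec{D}|^2-|\vec{B}|^2-|\vec{C}|^2$; all of these are right, and your route avoids the paper's case analysis while exhibiting the Killing-form invariant as the determinant of the reduced form.

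There is, however, one genuine gap in the forward direction. Positive definiteness of the form in $(X,Y)$ only yields $-\langle K,K\rangle_{AdS^{3,1}}\ge 0$, with equality exactly where $X=Y=0$; since $(X,Y)$ is a linear function of the point of $\R^{3,2}$, you must still rule out that the codimension-two plane $\{X=Y=0\}$ meets the hyperboloid. The same omission underlies your unproved claim that $-\langle K,K\rangle_{AdS^{3,1}}\to+\infty$ at infinity (needed both for the minimum to be attained at a critical point and to guarantee $\nu\ne 0$, without which the third Lagrange equation does not force $\vec{y}=(y^4\vec{B}-y^0\vec{C})/A$). Both points are settled by one further estimate that you did not make: on $\{X=Y=0\}$ one has $y^0=-\vec{C}\cdot\vec{y}/A$ and $y^4=\vec{B}\cdot\vec{y}/A$, so a point of intersection with the hyperboloid would satisfy $(\vec{B}\cdot\vec{y})^2+(\vec{C}\cdot\vec{y})^2-A^2|\vec{y}|^2=A^2>0$, whereas the largest eigenvalue of $\vec{y}\mapsto(\vec{B}\cdot\vec{y})^2+(\vec{C}\cdot\vec{y})^2$, namely $\lambda_+=\tfrac12\bigl(|\vec{B}|^2+|\vec{C}|^2+\sqrt{(|\vec{B}|^2-|\vec{C}|^2)^2+4(\vec{B}\cdot\vec{C})^2}\,\bigr)$, satisfies $\lambda_+<A^2$ exactly under \eqref{positive_condition} (substitute $(\vec{B}\cdot\vec{C})^2=|\vec{B}|^2|\vec{C}|^2-A^2|\vec{D}|^2$, the same manipulation as in your determinant computation). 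Hence $(\vec{B}\cdot\vec{y})^2+(\vec{C}\cdot\vec{y})^2<A^2|\vec{y}|^2$ for $\vec{y}\ne 0$, the plane misses the hyperboloid, and the triangle inequality gives $\sqrt{X^2+Y^2}\ge\bigl(|A|-\sqrt{\lambda_+}\bigr)\sqrt{1+|\vec{y}|^2}\to\infty$, supplying the missing coercivity. With this inserted, your argument is complete.
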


It can be checked that the expression $A^2 +  |\vec{D}|^2  -  |\vec{B}|^2- |\vec{C}|^2$ is exactly the norm square of $K$ in the Killing form of 
$\underline{so}(3,2)$. 
\begin{remark}
It is easy to see that $|A| >  \max \{ |\vec{B}|, |\vec{C}| \}$ is a necessary condition for $K$ to be everywhere timelike.  On the other hand, we will see that $|A| >|\vec{D}|$ and 
\[ A^2 +  |\vec{D}|^2 - |\vec{B}|^2-|\vec{C}|^2>0 \]
are preserved under the action of isometry on Killing fields.
\end{remark}
\begin{proof}
We consider two cases depending on whether $\vec{D}$ vanishes. In each case, we apply coordinate  change to transform the Killing  field into a simpler form.

Suppose $\vec{D}$ is not zero. We show that $(A, \vec{B},\vec{C},\vec{D})$ can be changed into the following form:
\[ (A, \vec{B},\vec{C},\vec{D})= (A,(B,0,0),(0,C,0),(0,0,D))  \]
with $AD+BC= 0$.

First, it is easy to see that starting from 
\[ (A, \vec{B},\vec{C},\vec{D})= (A,(B,0,0),(0,C,0),(0,0,D))  \]
with $AD+BC= 0$, we could perform the following boost
\[
\begin{split}
y_0' =& \cosh \theta y_0 + \sinh\theta y_1 \\
y_1' =& \cosh \theta y_1 + \sinh\theta y_0.
\end{split}
\]
Under the boost, $ (A,(B,0,0),(0,C,0),(0,0,D))$ is transformed to 
\[   (A \cosh \theta ,(B, -D \sinh \theta,0),(-A \sinh \theta,C,0),(0,0,D  \cosh \theta )).  \]
We observe that under the boost, the value of 
\[  A^2 +  |\vec{D}|^2 - |\vec{B}|^2-|\vec{C}|^2 \]
is preserved. So is the condition $|A| > |\vec{D}|$.

On the other hand, starting with a general $(A, \vec{B},\vec{C},\vec{D})$, up to an $SO(3)$ action on $y^1,y^2,y^3$, we may assume
\[ K=(a,(b_1,b_2,0),(c_1,c_2,0),(0,0,d))  \]
where 
\[  a c_1 - b_2d = 0 . \]
This implies that the vector $(a,d)$ is parallel to $(c_1,b_2)$. Moreover, $|A| > |\vec{C}|  $  implies that $|a| > |c_1|$. As a result, we can find $A'$, $B'$, $C'$, $D'$ and $\theta$ such that 
 \[K= (A' \cosh \theta ,(B', -D' \sinh \theta,0),(-A' \sinh \theta,C',0),(0,0,D'  \cosh \theta )).  \]
We then perform the boost mentioned before. As a result, it suffices to study the case 
\[ K= (A, \vec{B},\vec{C},\vec{D})= (A,(B,0,0),(0,C,0),(0,0,D))  \]
where $AD + BC = 0$.

The conditions 
\[
\begin{split}
|A| > & \max \{ |\vec{B}|, |\vec{C}|, |\vec{D}| \} \\
A^2 +  |\vec{D}|^2 > & |\vec{B}|^2+|\vec{C}|^2
\end{split}
\]
is the same as 

\[
\begin{split}
|A| > & \max \{|B|,|C|, |D| \} \\
A^2 +  D^2 > & B^2+C^2.
\end{split}
\]
From  $AD + BC = 0$ and $|A| >  \max \{|B|,|C| \}$, we conclude that $ \min\{|B|,|C|\}>|D|$ and $|A|>|D|$. As mentioned before,  $A >  \max \{|B|,|C| \}$ is clearly a necessary condition for the Killing field to be everywhere timelike, in the following, we show that it is also a sufficient condition.

For a Killing field $K$ of the form \eqref{Killing},
we compute
\[
\begin{split}
-\langle K, K\rangle_{AdS^{3,1}}^2 =&(Ay^0+Cy^2)^2+ (By^1- Ay^4)^2 -(By^0- Dy^2)^2-(Cy^4+ Dy^1)^2\\
 =& (A^2-B^2)(y^0)^2 +(A^2-C^2)(y^4)^2+(C^2-D^2)(y^2)^2 + (B^2-D^2)(y^1)^2 \\
    & + 2(AC+BD)y^0y^2-2(AB+CD)y^1y^4\\
=& (\sqrt{A^2-B^2} y^0 + \sqrt{C^2-D^2} y^2)^2 +  (\sqrt{A^2-C^2} y^1 + \sqrt{B^2-D^2} y^4)^2.
 \end{split}
\]
In the last equality, we use the condition $|A| >  \max\{|B|,|C|\}$ and $ \min\{|B|,|C|\}>|D|$ to take the square root and the condition $AD+BC=0$ to complete the square. This shows that $K$ is everywhere timelike. 

To evaluate value of $\min -\langle K, K\rangle_{\R^{3,2}}$, we observe the following statement: Given two numbers $a$ and $b$ such that $a > |b|$ and fix $s$, we have
\[ \min_{x^2-y^2=s^2} (ax+by)^2 = (a^2-b^2)s^2 \]
which can be proved by either Lagrange multiplier or the parametrization $x=s\cosh \theta$ and $y=s \sinh \theta$. Finally, to minimize 
\begin{equation}\label{min_lagrange}
\begin{split}
-\langle K, K\rangle_{AdS^{3,1}}^2 =  (\sqrt{A^2-B^2} y^0 + \sqrt{C^2-D^2} y^2)^2 +  (\sqrt{A^2-C^2} y^1 + \sqrt{B^2-D^2} y^4)^2
 \end{split}
\end{equation}
under the constraint 
\[ (y^0)^2+ (y^4)^2 - (y^1)^2-(y^2)^2 = 1 +(y^3)^2 \ge 1,\]
if $ (y^0)^2 -(y^2)^2$ and $(y^4)^2 - (y^1)^2$ are both non-negative, then we apply \eqref{min_lagrange} to both terms and conclude that the minimal value is indeed $A^2+D^2-B^2-C^2$. Otherwise, we may assume that  $ (y^0)^2 -(y^2)^2 > 1$. We simply apply \eqref{min_lagrange} to the term $ (\sqrt{A^2-B^2} y^0 + \sqrt{C^2-D^2} y^2)^2 $ and conclude that the value has to be larger.

The second and simpler case is $\vec{D}=0$. We can use a coordinate boost to rewrite $K$ into the simpler form 
\[ K= (A, \vec{B},\vec{C},\vec{D})= (A,(B,0,0),(0,0,0),(0,0,0))  \]
 where equation \eqref{positive_condition} is the same as $|A|>|B|$. The rest of the argument is the same as before. 
\end{proof}

Combing the above two Lemmas, we obtain: 
\begin{proposition}\label{observer_constraint}  A Killing field $K$ of the form \eqref{Killing}  is an observer Killing field in $\mathcal{O}$ if and only if
\[
\begin{split}
A \vec{D} =  - \vec{B} \times \vec{C},A > & \max \{ |\vec{B}|, |\vec{C}|, |\vec{D}| \} 
\end{split}
\] and
\[ A^2 +  |\vec{D}|^2  -   |\vec{B}|^2-|\vec{C}|^2=1.\]
\end{proposition}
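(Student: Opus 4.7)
The proposition is essentially a consolidation of the two preceding lemmas together with the ``future'' and ``unit'' requirements, so my plan is to carry out a direct synthesis in four short steps.

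First, I would unpack Definition \ref{observer_Killing}: an observer Killing field is (a) hypersurface orthogonal, (b) future directed timelike, and (c) normalized so that $\min(-\langle K, K\rangle_{AdS^{3,1}}) = 1$. Lemma 3.1 immediately translates (a) into the cross-product identity $\vec{B}\times\vec{C}=-A\vec{D}$. Lemma \ref{lemma_timelike} then translates the everywhere-timelike half of (b), under the assumption (a), into the two inequalities $|A| > \max\{|\vec{B}|, |\vec{C}|, |\vec{D}|\}$ and $A^2 + |\vec{D}|^2 > |\vec{B}|^2 + |\vec{C}|^2$, and it also gives the explicit formula $\min(-\langle K, K\rangle_{AdS^{3,1}}) = A^2 + |\vec{D}|^2 - |\vec{B}|^2 - |\vec{C}|^2$. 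Feeding this into the unit condition (c) produces the scalar equation $A^2 + |\vec{D}|^2 - |\vec{B}|^2 - |\vec{C}|^2 = 1$.

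Second, I would upgrade $|A|$ to $A$ using the future directedness. The cleanest approach is to evaluate both $K$ and $\frac{\partial}{\partial t}=y^0\frac{\partial}{\partial y^4}-y^4\frac{\partial}{\partial y^0}$ at the base point $(y^0,y^1,y^2,y^3,y^4)=(0,0,0,0,1)$ of $AdS^{3,1}$. At this point $\frac{\partial}{\partial t}=-\frac{\partial}{\partial y^0}$ and $K=-A\frac{\partial}{\partial y^0}+C_j\frac{\partial}{\partial y^j}$, so under the ambient metric one computes $\langle K,\frac{\partial}{\partial t}\rangle=-A$. Since $\frac{\partial}{\partial t}$ is future timelike there, the condition that $K$ be future timelike forces $\langle K,\frac{\partial}{\partial t}\rangle<0$, i.e.\ $A>0$. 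Combined with the two inequalities from Lemma \ref{lemma_timelike}, this collapses $|A|>\max\{|\vec{B}|,|\vec{C}|,|\vec{D}|\}$ to $A>\max\{|\vec{B}|,|\vec{C}|,|\vec{D}|\}$; the inequality $A^2+|\vec{D}|^2>|\vec{B}|^2+|\vec{C}|^2$ is then automatic from the normalization $A^2+|\vec{D}|^2-|\vec{B}|^2-|\vec{C}|^2=1$ and is therefore not listed separately in the statement.

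Third, for the converse I would reverse the implications: given the three stated conditions, Lemma 3.1 supplies (a); the inequalities $A>\max\{|\vec{B}|,|\vec{C}|,|\vec{D}|\}$ together with $A^2+|\vec{D}|^2=1+|\vec{B}|^2+|\vec{C}|^2>|\vec{B}|^2+|\vec{C}|^2$ supply the hypothesis of Lemma \ref{lemma_timelike}, hence (b) with $K$ everywhere timelike; the positivity $A>0$ secures future directedness via the same base-point pairing; and the formula for $\min(-\langle K,K\rangle_{AdS^{3,1}})$ from Lemma \ref{lemma_timelike} combined with the third equation gives (c).

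No step here should be technically hard because the heavy lifting was done in Lemmas 3.1 and 3.2; the only point requiring mild care is fixing a convenient base point to convert the time-orientation condition into the sign condition $A>0$, and confirming (as the remark after Lemma \ref{lemma_timelike} already notes) that this sign, together with the inequalities among $|A|,|\vec{B}|,|\vec{C}|,|\vec{D}|$, is preserved by the $SO(3,2)$ action so the characterization is coordinate-independent.
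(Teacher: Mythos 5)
Your proposal is correct and follows essentially the same route as the paper, which simply states the proposition as the combination of Lemma 3.1 (hypersurface orthogonality $\Leftrightarrow \vec{B}\times\vec{C}=-A\vec{D}$) and Lemma \ref{lemma_timelike} (the timelike inequalities and the formula for $\min(-\langle K,K\rangle_{AdS^{3,1}})$). Your base-point computation showing that future-directedness forces $A>0$ (and hence upgrades $|A|$ to $A$) is a detail the paper leaves implicit, and it checks out.
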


\section{Spinorial Killing fields of $AdS^{3,1}$}
 We first review the  Killing spinors and the construction of the corresponding Killing fields as follows. An irreducible representation of Clifford algebra of $\mathbb{R}^{3,1}$ on $\mathbb{C}^4$ is given by

\[ \gamma_0=\left[ \begin{array}{clclclc} 0 && 0 && 1 && 0 \\
                                          0 && 0 && 0 && 1 \\
                                          1 && 0 && 0 && 0 \\
                                          0 && 1 && 0 && 0
                   \end{array} \right],\ \gamma_1=\left[ \begin{array}{clclclc} 0 && 0 && 1 && 0 \\
                                          0 && 0 && 0 && -1 \\
                                          -1 && 0 && 0 && 0 \\
                                          0 && 1 && 0 && 0
                   \end{array} \right] \]
\[ \gamma_2=\left[ \begin{array}{clclclc} 0 && 0 && 0 && 1 \\
                                          0 && 0 && 1 && 0 \\
                                          0 && -1 && 0 && 0 \\
                                          -1 && 0 && 0 && 0
                   \end{array} \right],\ \gamma_3=\left[ \begin{array}{clclclc} 0 && 0 && 0 && -i \\
                                          0 && 0 && i && 0 \\
                                          0 && i && 0 && 0 \\
                                          -i && 0 && 0 && 0
                   \end{array} \right]. \]
Denote by $\gamma_{i_1i_2\dots i_k}=\gamma_{i_1}\gamma_{i_2}\dots \gamma_{i_k}$.  On $\mathbb{C}^4$ there is a form defined by
\[ ((v_1,v_2),(u_1,u_2))=\left\langle v_1,u_2 \right\rangle+\left\langle v_2,u_1 \right\rangle,\ \ v_j,u_j\in \mathbb{C}^2 \]
which is linear in the first argument $(v_1, v_2)$ and conjugate linear in the second one $(u_1, u_2)$. It can be easily checked that $(X.\psi,\phi)=(\psi,X.\phi)$ for any vector $X\in \R^{3,1}$, where $``."$ is the Clifford multiplication.  This form is $SL(2,\mathbb{C})$-invariant where the $SL(2,\mathbb{C})$ action is given by
\[  \rho (g)=\left[ \begin{array}{clc} \bar{g}&&0 \\
                                        0 && (g^t)^{-1}  
                   \end{array} \right],\] for any $g\in SL(2, \mathbb{C})$. 
                   
All these can be extended to any 4-dimensional spin Lorentzian manifold, where the spinor bundle is the complex rank 4 bundle associated with 
the standard representation.

 Let $\Sigma$ be the irreducible spinor bundle of $AdS^{3,1}$. 

\begin{definition}\label{Killing_spinor}
A section $\sigma$ of the complex rank 4 spinor bundle on $AdS^{3,1}$ is an imaginary Killing spinor if $\nabla_X\sigma=\frac{i}{2}X.\sigma$ where $\nabla$ is the spinor connection. 
\end{definition}

The following Lemma is well-known and the proof is included for completeness. 
\begin{lemma} \label{spinor_to_vector1}
Suppose $\psi$ is an imaginary Killing spinor on $AdS^{3,1}$, then the 1-form $\alpha$ defined by $\alpha(X)=-(X.\psi,\psi)$ is a Killing 1-form.
\end{lemma}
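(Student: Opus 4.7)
The plan is to show $\alpha$ is Killing by verifying $(\nabla_Y \alpha)(X) + (\nabla_X \alpha)(Y) = 0$ pointwise for arbitrary tangent vectors $X, Y$. The argument is a direct computation that combines three pieces of structure: compatibility of the spinor connection with the form $(\cdot,\cdot)$ and with Clifford multiplication, the imaginary Killing equation $\nabla_Y \psi = \tfrac{i}{2} Y.\psi$, and the symmetry property $(X.\psi,\phi) = (\psi,X.\phi)$ together with the Clifford relation $X.Y + Y.X = -2\langle X,Y\rangle$.

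First I would write $(\nabla_Y \alpha)(X) = Y(\alpha(X)) - \alpha(\nabla_Y X)$ and expand
\[
Y((X.\psi,\psi)) = ((\nabla_Y X).\psi,\psi) + (X.\nabla_Y\psi,\psi) + (X.\psi,\nabla_Y\psi),
\]
using that $\nabla$ commutes with Clifford multiplication and preserves the bilinear/sesquilinear pairing. The first term on the right cancels against $\alpha(\nabla_Y X)$, leaving only the spinor-derivative terms. Substituting the imaginary Killing equation and remembering that the form is conjugate-linear in the second slot (so the $i/2$ from the second factor picks up a minus sign), this reduces to
\[
(\nabla_Y \alpha)(X) = -\tfrac{i}{2}\bigl(X.Y.\psi,\psi\bigr) + \tfrac{i}{2}\bigl(X.\psi,Y.\psi\bigr).
\]

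Next I would apply the symmetry property $(X.\psi,\phi)=(\psi,X.\phi)$ to convert $(X.\psi,Y.\psi)$ into $(\psi,X.Y.\psi)$, and do the same after swapping $X \leftrightarrow Y$. Adding $(\nabla_Y \alpha)(X) + (\nabla_X\alpha)(Y)$ produces the two combinations $(X.Y.\psi + Y.X.\psi,\psi)$ and $(\psi, X.Y.\psi + Y.X.\psi)$. Each equals $-2\langle X,Y\rangle(\psi,\psi)$ by the Clifford relation, and since $(\psi,\psi)$ is the same scalar in both places (the form is symmetric on the diagonal), the two terms appear with opposite signs $-i/2$ and $+i/2$ and cancel. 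This yields the Killing equation.

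The only real subtlety is bookkeeping with conjugate linearity: one must ensure that $\tfrac{i}{2}$ from $\nabla_Y \psi$ in the second argument of the form becomes $-\tfrac{i}{2}$, while the same $\tfrac{i}{2}$ in the first argument stays as $+\tfrac{i}{2}$. Provided one tracks this consistently and uses the $SL(2,\mathbb{C})$-invariant symmetry of the form, the identity is immediate and requires no additional input from the geometry of $AdS^{3,1}$ beyond the Killing spinor equation itself; in particular, it does not use any specific curvature computation. The main obstacle is thus purely notational, not conceptual.
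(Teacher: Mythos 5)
Your proof is correct and takes essentially the same route as the paper: a direct verification of the symmetrized covariant derivative of $\alpha$ using compatibility of the spinor connection with the pairing and with Clifford multiplication, the Killing spinor equation, and the adjointness $(X.\psi,\phi)=(\psi,X.\phi)$. The only (cosmetic) difference is in how the final cancellation is organized --- the paper groups terms into commutators, showing each $(\nabla\alpha)(e_j,e_k)$ equals $-\tfrac{i}{2}([e_j,e_k].\psi,\psi)$ and hence that $\nabla\alpha$ is antisymmetric, whereas you symmetrize first and invoke the anticommutator relation $X.Y+Y.X=-2\langle X,Y\rangle$ together with the reality of $\langle X,Y\rangle$; both are valid.
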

\begin{proof}
\begin{align*}
 \nabla \alpha (e_k,e_j)+\nabla \alpha (e_j,e_k)&=-(\frac{i}{2}e_j.e_k.\psi,\psi)-(e_j.\psi,\frac{i}{2}e_k.\psi)-(\frac{i}{2}e_k.e_j.\psi,\psi)-(e_k.\psi,\frac{i}{2}e_j.\psi)\\
 &=-\frac{i}{2}([e_j,e_k].\psi,\psi)-\frac{i}{2}([e_k,e_j].\psi,\psi)=0.
\end{align*}
\end{proof}

For a spinor Killing vector field (Definition \ref{spinor_Killing}) of $AdS^{3,1}$ of the form \eqref{Killing}, the coefficients $A, \vec{B}, \vec{C}, \vec{D}$ can
be explicitly related to imaginary Killing spinor $\psi$ as follows.

Take the base point $o\in AdS^{3,1}$ with $\R^{3,2}$ coordinates $y_0=1, y_i=0, i=1, 2, 3, y_4=0$. At $o$, the frame $\{\frac{\partial}{\partial y^4},\ \frac{\partial}{\partial y^1},\ \frac{\partial}{\partial y^2},\ \frac{\partial}{\partial y^3}\}$ is orthonormal.
We choose a trivialization of the spinor bundle $o$ such that the Clifford multiplications by $\frac{\partial}{\partial y^4}$ and $\frac{\partial}{\partial y^i}, i=1, 2, 3$ are identified with  matrix multiplication by $\gamma_0$ and $\gamma_i, i=1, 2, 3$, respectively.  The set of imaginary Killing spinors is parametrized by the fiber of the spinor bundle at $o$, which is isomorphic to $\mathbb{C}^4$.
\begin{lemma}
Let $\psi$ be an imaginary Killing spinor and $K$ be the corresponding Killing field of the form \eqref{Killing}. Then the coefficients of $K$ are:
\begin{equation}\label{coefficients}
\begin{aligned}
A=(\gamma_0\psi,\psi),&\ \   B_j=- (\gamma_j\psi,\psi)\\
C_j=(i\gamma_{0j}\psi,\psi) &,\ \ D_j= (\frac{i}{2}\epsilon^{jkl}\gamma_{kl}\psi,\psi)
\end{aligned}
\end{equation}
where all quantities above are evaluated at the base point $o$
\end{lemma}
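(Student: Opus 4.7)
Since a Killing field on $AdS^{3,1}$ is uniquely recovered from its value and its covariant derivative at a single point, my strategy is to compute both $K(o)$ and $d\alpha|_o$ directly from the defining spinor formula $\alpha(X)=-(X.\psi,\psi)$ and then match these against the ambient expressions \eqref{Killing} and \eqref{d_alpha} to read off the four coefficients $(A,\vec{B},\vec{C},\vec{D})$.

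\emph{The $A$ and $B_j$ coefficients.} Substituting the base-point coordinates $y^0=1$, $y^i=0$, $y^4=0$ into \eqref{Killing} annihilates the $C$- and $D$-terms (which carry factors of $y^4$ or $y^q$) and leaves $K(o)=A\,\partial_{y^4}+B_j\,\partial_{y^j}$. Pairing with the orthonormal frame $\{\partial_{y^4},\partial_{y^j}\}$ (in which $\partial_{y^4}$ is timelike and is identified with $\gamma_0$) and evaluating $\alpha(X)=-(X.\psi,\psi)$ on each frame vector immediately yields $A=(\gamma_0\psi,\psi)$ and $B_j=-(\gamma_j\psi,\psi)$.

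\emph{The $C_j$ and $D_j$ coefficients.} These multiply Killing fields vanishing at $o$, so I pass to $d\alpha|_o$. Lemma \ref{spinor_to_vector1} has already shown $\alpha$ is a Killing one-form, so $d\alpha(X,Y)=2\nabla_X\alpha(Y)$. Differentiating $\alpha(Y)=-(Y.\psi,\psi)$, using $\nabla_X\psi=\tfrac{i}{2}X.\psi$, and exploiting conjugate-linearity of $(\cdot,\cdot)$ in the second slot gives
\[
\nabla_X\alpha(Y)=-\tfrac{i}{2}(Y.X.\psi,\psi)+\tfrac{i}{2}\,\overline{(Y.X.\psi,\psi)}.
\]
Using the adjointness $(Z.\phi,\xi)=(\phi,Z.\xi)$ together with the Clifford anticommutation $X.Y=-Y.X$ for orthogonal $X\neq Y$, one checks $(Y.X.\psi,\psi)=-\overline{(Y.X.\psi,\psi)}$, so this quantity is purely imaginary and the displayed formula collapses to $\nabla_X\alpha(Y)=i(X.Y.\psi,\psi)$. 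Hence $d\alpha(X,Y)=2i(X.Y.\psi,\psi)$ on orthonormal pairs tangent to $AdS^{3,1}$ at $o$.

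\emph{Matching.} Evaluating \eqref{d_alpha} at $o$ on the tangent pair $(\partial_{y^4},\partial_{y^j})$ kills every term except the $C$-term, giving $2C_j=2i(\gamma_{0j}\psi,\psi)$; on $(\partial_{y^j},\partial_{y^k})$ only the $D$-term survives, giving $2D_p\epsilon_{pjk}=2i(\gamma_{jk}\psi,\psi)$, and contracting with $\epsilon^{ljk}$ via $\epsilon^{ljk}\epsilon_{pjk}=2\delta^l_p$ produces $D_l=(\tfrac{i}{2}\epsilon^{ljk}\gamma_{jk}\psi,\psi)$. The main obstacle is the algebraic step in the preceding paragraph: turning the two complex-conjugate terms into the compact expression $i(X.Y.\psi,\psi)$ requires carefully combining the Hermitian symmetry of $(\cdot,\cdot)$ with the self-adjointness of Clifford multiplication to see that $(Y.X.\psi,\psi)$ is purely imaginary. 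Once this is in place, the remaining coefficient identification is straightforward linear algebra on $T_oAdS^{3,1}$.
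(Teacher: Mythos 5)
Your proposal is correct and follows essentially the same route as the paper: read off $A$ and $B_j$ from $\alpha$ at the base point, and $C_j$, $D_j$ from $d\alpha$ at the base point, computed via the imaginary Killing spinor equation. The only cosmetic difference is that you obtain $d\alpha(X,Y)=2i(X.Y.\psi,\psi)$ through $d\alpha=2\nabla\alpha$ and the purely-imaginary observation, whereas the paper expands $d\alpha(X,Y)=X\alpha(Y)-Y\alpha(X)-\alpha([X,Y])$ to get $i\bigl((X.Y-Y.X).\psi,\psi\bigr)$ — the same identity for orthogonal frame vectors.
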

\begin{proof}
Let $\alpha$ be the 1-form dual to $K$. With the above identification and \eqref{alpha}, $A=-\alpha(\frac{\partial}{\partial y^4})=(\frac{\partial}{\partial y^4}.\psi,\psi)$. 
$B_j=\alpha (\frac{\partial}{\partial y^j})=-(\frac{\partial}{\partial y^j}.\psi,\psi)$. By \eqref{d_alpha}, $C_j=\frac{1}{2}d\alpha (\frac{\partial}{\partial y^4},\frac{\partial}{\partial y^j})$ and $D_j=\frac{\epsilon^{jkl}}{4}d\alpha (\frac{\partial}{\partial y^k},\frac{\partial}{\partial y^l})$.

We compute 
\begin{align*}
 d\alpha (X,Y)&=-X(Y.\psi,\psi)+Y(X.\psi,\psi)+([X,Y].\psi,\psi)\\
              &=-(Y.\frac{i}{2}X.\psi,\psi)-(Y.\psi,\frac{i}{2}X.\psi)+(X.\frac{i}{2}Y.\psi,\psi)+(X.\psi,\frac{i}{2}Y.\psi)\\
              &=i([X,Y].\psi,\psi).
\end{align*} Therefore, 
\[ C_j=\frac{i}{2}([\frac{\partial}{\partial y^4},\frac{\partial}{\partial y^j}].\psi,\psi )=(i\frac{\partial}{\partial y^4}.\frac{\partial}{\partial y^j}.\psi,\psi ). \]
\[ D_j=\frac{i}{4}\epsilon^{jkl}([\frac{\partial}{\partial y^k},\frac{\partial}{\partial y^l}].\psi,\psi)=\frac{i}{2}\epsilon^{jkl}(\frac{\partial}{\partial y^k}.\frac{\partial}{\partial y^l}.\psi,\psi). \]
\end{proof}

Since the space of imaginary spinors can be identify with $\mathbb{C}^4$ and the space of Killing fields is identified with $\underline{so}(3,2)$, we obtain the following corollary:

\begin{corollary} \label{spinor_map}The map $\psi\mapsto (A, \vec{B}, \vec{C}, \vec{D})$ where $A, \vec{B}, \vec{C}, \vec{D}$ are given in \eqref{coefficients} can be considered as a map from 
$\mathbb{C}^4$ to $\underline{so}(3,2)$ whose image is the set $\mathcal{S}$ of spinor Killing fields.
\end{corollary}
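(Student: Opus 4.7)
The plan is to verify that the assignment $\psi \mapsto (A,\vec{B},\vec{C},\vec{D})$ is well-defined as a map between the stated spaces and then identify its image. The statement is essentially a repackaging of the previous lemma together with the two standard identifications ``space of imaginary Killing spinors $\cong \mathbb{C}^4$'' and ``space of Killing fields $\cong \underline{so}(3,2)$'', so the proof should be short and mostly bookkeeping.

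First I would justify the source identification. The Killing spinor equation $\nabla_X \sigma = \tfrac{i}{2} X.\sigma$ is a first-order linear ODE along any curve, so $\sigma$ is uniquely determined by its value $\sigma(o)$ at the base point $o$, and conversely every element of $\Sigma_o$ extends to a (unique) imaginary Killing spinor by integrability on the maximally symmetric space $AdS^{3,1}$. Using the trivialization chosen just before equation \eqref{coefficients}, the fiber $\Sigma_o$ is identified with $\mathbb{C}^4$. Thus assigning to $\psi$ its value $\psi(o) \in \mathbb{C}^4$ gives a linear isomorphism from the space of imaginary Killing spinors to $\mathbb{C}^4$.

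Next I would justify the target. By Lemma \ref{spinor_to_vector1}, the $1$-form $\alpha(X) = -(X.\psi,\psi)$ is a Killing $1$-form, so its metric dual is a genuine Killing field of $AdS^{3,1}$ and therefore corresponds under \eqref{Killing} to a unique quadruple $(A,\vec{B},\vec{C},\vec{D}) \in \underline{so}(3,2)$. The explicit formulas \eqref{coefficients} from the previous lemma show that this quadruple depends only on $\psi(o) \in \mathbb{C}^4$ (indeed the right-hand sides of \eqref{coefficients} are pointwise expressions at $o$), so the composite $\psi(o) \mapsto \psi \mapsto (A,\vec{B},\vec{C},\vec{D})$ descends to a well-defined map $\mathbb{C}^4 \to \underline{so}(3,2)$.

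Finally, for the image, I would simply invoke Definition \ref{spinor_Killing}: by definition $\mathcal{S}$ is the set of Killing fields $K$ such that $\langle K,X\rangle_{AdS^{3,1}} = -(X.\psi,\psi)$ for some imaginary Killing spinor $\psi$, which is exactly the set of Killing fields whose coefficients $(A,\vec{B},\vec{C},\vec{D})$ are produced by \eqref{coefficients} from some $\psi \in \mathbb{C}^4$. Hence the image equals $\mathcal{S}$. There is no real obstacle here; the only substantive point is the well-definedness of the identification $\text{imaginary Killing spinors} \leftrightarrow \mathbb{C}^4$, which in turn rests on the ODE uniqueness and the maximal symmetry of $AdS^{3,1}$ guaranteeing existence.
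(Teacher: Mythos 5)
Your proposal is correct and follows essentially the same route as the paper, which simply observes that imaginary Killing spinors are parametrized by the fiber $\Sigma_o\cong\mathbb{C}^4$ at the base point, that Killing fields are identified with $\underline{so}(3,2)$ via \eqref{Killing}, and that the image is $\mathcal{S}$ by Definition \ref{spinor_Killing}. Your added justification of the existence and uniqueness of the Killing spinor extending a given value at $o$ is a reasonable elaboration of what the paper leaves implicit.
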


To compare the sets $\mathcal{S}$ and $\mathcal{O}$, we will characterize the set $\mathcal{S}$  in terms of its defining equations.
A key to deriving these defining equations is the Fierz identity, which is briefly reviewed here. See \cite{VP} for more details. 

Let $M_{\mathbb{C}}(4)$ be the complex vector space of $4\times 4$ complex matrices and denote by $I^{\,\,\,\beta}_{\alpha}$ the identity matrix. The main idea is as follows: 
\[\{\frac{1}{2} I,\frac{1}{2} \gamma_0,\frac{1}{2} \gamma_j,\frac{1}{2} \gamma_{0j},\frac{1}{2} \gamma_{jk},\frac{1}{2} \gamma_{0jk},\frac{1}{2} \gamma_{123},\frac{1}{2} \gamma_{0123} \}\]
 form a unitary basis for $M_{\mathbb{C}}(4)$ with respect to the Hermitian product defined using the trace. Let $\mathfrak{B}$ denote the index set of $(a_1, \cdots, a_k), k=0, 1, 2, 3, 4$ in this basis. As a result, any matrix can be expressed  as a linear combination of the basis where the coefficients can be computed effectively. A direct calculation shows that  each $\gamma_{a_1,\dots a_k}$ is either Hermitian or skew-Hermitian. Precisely, the complex conjugate $\gamma^*_{a_1\dots a_k}$ is

\begin{align*}
\gamma^*_{a_1\dots a_k} &=(-1)^{k(k+1)/2+s}\gamma_{a_1\dots a_k}, \text{   where  }
s&= \left\{ \begin{array}{clc} 1 && \textup{if one of }\ a_1\dots a_k\ \textup{is}\ 0 \\
                       0 && \textup{otherwise} \end{array}\right . .
\end{align*}

Any matrix $M_{\alpha} ^{\ \ \beta}$ in $M_{\mathbb{C}}(4)$ can be expressed as 
\begin{equation}\label{Fierz-matrix}
 4M_{\alpha}^{\ \ \beta} = \sum_{(a_1,\cdots, a_k)\in \mathfrak{B}}(-1)^{k(k+1)/2+s} (\gamma_{a_1\dots a_k})_{\alpha}^{\ \ \beta} Tr( \gamma_{a_1\dots a_k}M).
\end{equation}
Since equation \eqref{Fierz-matrix} holds for any matrix $M$, it follows that
\begin{equation}\label{Fierz}
4I_{\alpha}^{\ \ \mu}I_{\nu}^{\ \ \beta} = \sum_{(a_1, \cdots, a_k) \in \mathfrak{B}}(-1)^{k(k+1)/2+s} (\gamma_{a_1\dots a_k})_{\alpha}^{\ \ \beta} (\gamma_{a_1\dots a_k})_{\nu}^{\ \ \mu}.
\end{equation}

\begin{proposition}
A Killing field $K$ of the form \eqref{Killing}  is a spinor Killing field in $\mathcal{S}$ if and only if $(A, \vec{B}, \vec{C}, \vec{D})$ with $A>0$ satisfies the following relations:
\begin{equation}\label{def_V}\begin{split} A^2&=|\vec{B}|^2+|\vec{C}|^2+|\vec{D}|^2-2\Delta^2\\
(\vec{B}\cdot \vec{C})^2&=(|\vec{B}|^2-\Delta^2)(|\vec{C}|^2-\Delta^2)\\
(\vec{C}\cdot \vec{D})^2&=(|\vec{C}|^2-\Delta^2)(|\vec{D}|^2-\Delta^2)\\
(\vec{B}\cdot \vec{D})^2&=(|\vec{B}|^2-\Delta^2)(|\vec{D}|^2-\Delta^2)\\
&|\vec{B}|^2,  |\vec{C}|^2,\ |\vec{D}|^2\geq\Delta^2\geq 0\\
&(\vec{B}\cdot\vec{C})(\vec{C}\cdot\vec{D})(\vec{D}\cdot\vec{B})\geq 0
\end{split}\end{equation} where  $\Delta^2=-\vec{B}\cdot (\vec{C}\times \vec{D})/A$.

\end{proposition}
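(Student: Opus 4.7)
My plan is to exploit the Fierz identity (\ref{Fierz}) to derive all algebraic relations among the bilinears $(\gamma_{I}\psi,\psi)$ and then to eliminate the ones not appearing in $(A,\vec B,\vec C,\vec D)$. Besides these ten real quantities, the Clifford basis yields six further real bilinears: the scalar $S=(\psi,\psi)$, the pseudo-scalar $P=(i\gamma_{0123}\psi,\psi)$, and an axial four-vector $(E_{0},\vec E)$ built from $\gamma_{123}$ and $\gamma_{0jk}$ with suitable factors of $i$ inserted so that each component is real. Together the sixteen bilinears comprise the independent coefficients of the rank-one matrix $\Psi_{\alpha}^{\ \beta}=\psi_\alpha\overline{\psi^\beta}$ in its Clifford expansion via (\ref{Fierz-matrix}).

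The heart of the argument is the single matrix equation $\Psi^{2}=S\,\Psi$, which is an immediate consequence of the rank-one nature of $\Psi$. Expanding both sides in the Clifford basis and matching the coefficient of each $\gamma_{a_1\dots a_k}$ produces the complete system of quadratic Fierz identities in one stroke. Among these, two play a distinguished role:
\begin{align*}
A^{2}-|\vec B|^{2}-|\vec C|^{2}-|\vec D|^{2} &= -2(S^{2}+P^{2}),\\
\vec B\cdot(\vec C\times\vec D) &= -A\,(S^{2}+P^{2}).
\end{align*}
These force the identification $\Delta^{2}=S^{2}+P^{2}$, which simultaneously yields the formula $\Delta^{2}=-\vec B\cdot(\vec C\times\vec D)/A$, the non-negativity $\Delta^{2}\ge 0$, and the first equation of (\ref{def_V}). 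Three further Fierz identities express $|\vec B|^{2}-\Delta^{2}$, $|\vec C|^{2}-\Delta^{2}$, and $|\vec D|^{2}-\Delta^{2}$ as squared bilinears involving $S, P, E_{0}, \vec E$; the bounds $|\vec B|^{2},|\vec C|^{2},|\vec D|^{2}\ge \Delta^{2}$ follow immediately, and the three dot-product equalities in (\ref{def_V}) drop out after eliminating $S, P, E_{0}, \vec E$.

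For the converse direction, given $(A,\vec B,\vec C,\vec D)$ with $A>0$ satisfying (\ref{def_V}), I would first reduce to a canonical position using the $SO(3)$ action on the spatial axes and, if necessary, a boost in $\mathbb{R}^{3,2}$, placing $\vec B,\vec C,\vec D$ into a normal form with few nonzero components, and then solve directly for $\psi\in\mathbb{C}^{4}$ from the defining expressions (\ref{coefficients}). The condition $A>0$ together with the Fierz relations determines $S$ and $P$ up to an overall $U(1)$ phase, which is precisely the kernel of the map $\psi\mapsto(A,\vec B,\vec C,\vec D)$. I expect the main obstacle to be the sign inequality $(\vec B\cdot\vec C)(\vec C\cdot\vec D)(\vec D\cdot\vec B)\ge 0$: the quadratic equalities determine only the squares of these dot products, so one must separately verify that this sign constraint is a genuine consequence of the spinor origin of $(A,\vec B,\vec C,\vec D)$ and, conversely, that together with the other relations it suffices for the explicit reconstruction of $\psi$ to succeed.
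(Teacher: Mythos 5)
Your strategy for the forward inclusion $\mathcal{S}\subset\mathcal{V}$ --- expand the rank-one matrix $\psi_\alpha\overline{\psi^\beta}$ in the Clifford basis and use the Fierz identity \eqref{Fierz} to relate $(A,\vec B,\vec C,\vec D)$ to the six remaining real bilinears, then eliminate the latter --- is exactly the paper's, but the two ``distinguished'' identities on which you hang the argument are false. Writing $S$ for the scalar, $P$ for the pseudoscalar and $(E_0,\vec E)$ for the axial vector, the relations one actually obtains are of the form $A\vec B=-\vec C\times\vec D+E_0\vec E$, $A\vec C=-\vec D\times\vec B+P\vec E$, $A\vec D=-\vec B\times\vec C+S\vec E$, $AE_0=\vec B\cdot\vec E$, $AP=\vec C\cdot\vec E$, $AS=\vec D\cdot\vec E$, $A\vec E=E_0\vec B+P\vec C+S\vec D$, plus the trace identity $3A^2=|\vec B|^2+|\vec C|^2+|\vec D|^2+|\vec E|^2+S^2+P^2+E_0^2$. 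Eliminating $\vec E$ yields $E_0^2=|\vec B|^2-\Delta^2$, $P^2=|\vec C|^2-\Delta^2$, $S^2=|\vec D|^2-\Delta^2$ and $A^2=S^2+P^2+E_0^2+\Delta^2$, so that $\Delta^2=A^2-S^2-P^2-E_0^2=A^2-|\vec E|^2$, \emph{not} $S^2+P^2$. Concretely, for $\psi=(1,1,1,1)/2$ one has $\vec B=0$, hence $\Delta^2=0$, while $S=(\psi,\psi)=1$, so both of your displayed identities fail (the first reads $0=-2(S^2+P^2)\le -2$). Thus the first equation of \eqref{def_V} and $\Delta^2\ge 0$ do not follow from your identities as stated: the axial vector enters the elimination in an essential way. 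Ironically, the sign condition you single out as the main obstacle is the easiest consequence of the correct relations: one finds $\vec B\cdot\vec C=PE_0$, $\vec C\cdot\vec D=SP$, $\vec B\cdot\vec D=E_0S$, so the triple product is $(SPE_0)^2\ge 0$.

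The converse inclusion $\mathcal{V}\subset\mathcal{S}$ is where most of the work lies, and ``put $\vec B,\vec C,\vec D$ in normal form by $SO(3)$ and a boost, then solve for $\psi$'' is not a proof. The isometries of $\R^{3,2}$ alone do not reduce a general element of $\mathcal{V}$ to a form where $\psi$ can be read off. The paper instead exploits two further symmetries that preserve both $\mathcal{V}$ and $\mathcal{S}$: the rotation $(\vec B,\vec C)\mapsto(\cos\theta\,\vec B+\sin\theta\,\vec C,\,-\sin\theta\,\vec B+\cos\theta\,\vec C)$ generated by the time-translation flow (whose invariance of $\mathcal{V}$ requires a computation using the sign condition), and a cyclic permutation of $(\vec B,\vec C,\vec D)$ realized by an explicit linear map on $\mathbb{C}^4$. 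These are used to force $\vec B\cdot\vec D=0$, which by \eqref{def_V} puts one in the degenerate stratum $|\vec B|^2=\Delta^2$, where an explicit spinor can be written down after a further case analysis. You would need to supply this reduction, or an equivalent one, together with the explicit solution of the quadratic system for $\psi$ in the reduced case; as it stands the converse direction is only a statement of intent.
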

\begin{proof} Let $\mathcal{V}$ be the set of $(A, \vec{B}, \vec{C}, \vec{D})$ that satisfies \eqref{def_V}. We prove that $\mathcal{S}=\mathcal{V}$
in the following. 
\bigskip

\noindent {\bf Part 1: $\mathcal{S}\subset\mathcal{V}$}

Let $K\in \mathcal{S} $ be a spinor Killing field which is defined by $\psi\in\mathbb{C}^4$ through the identification at the base point $o$. Define $A,B_j,C_j,D_j$ as before. Furthermore, we define
\begin{align*}
E_j=\left\langle\frac{i}{2}\epsilon^{jkl}\gamma_{kl}\psi,\psi\right\rangle &,\ \ G=\left\langle\gamma_0\psi,\psi \right\rangle\\
F=-\left\langle\gamma_{123}\psi,\psi \right\rangle &,\ \ H=-\left\langle i\gamma_{0123}\psi,\psi \right\rangle.
\end{align*}
We derive relations among $A, \vec{B}, \vec{C}, \vec{D}, \vec{E}, F, G, H$  by contracting the Fierz identity with various spinors.

Contracting (\ref{Fierz}) with $\psi^\alpha\bar{\psi}_\beta\psi^\nu\bar{\psi}_\mu$ gives
\[ 3A^2=|\vec{B}|^2+|\vec{C}|^2+|\vec{D}|^2+|\vec{E}|^2+F^2+G^2+H^2. \]
Contracting (\ref{Fierz}) with $(\gamma_{0j})^\alpha_{\ \ \tilde{\alpha}}\psi^{\tilde{\alpha}}\bar{\psi}_\beta\psi^\nu\bar{\psi}_\mu$ gives
\[ A\vec{B}=- \vec{C}\times\vec{D}+H\vec{E}. \]
Contracting (\ref{Fierz}) with $(\gamma_{j})^\alpha_{\ \ \tilde{\alpha}}\psi^{\tilde{\alpha}}\bar{\psi}_\beta\psi^\nu\bar{\psi}_\mu$ gives
\[ A\vec{C}=-\vec{D}\times\vec{B}+F\vec{E}. \]
Contracting (\ref{Fierz}) with $(\gamma_{0ij})^\alpha_{\ \ \tilde{\alpha}}\psi^{\tilde{\alpha}}\bar{\psi}_\beta\psi^\nu\bar{\psi}_\mu$ gives
\[ A\vec{D}=-\vec{B}\times\vec{C}+G\vec{E}. \]
Contracting (\ref{Fierz}) with $(\gamma_{0123})^\alpha_{\ \ \tilde{\alpha}}\psi^{\tilde{\alpha}}\bar{\psi}_\beta\psi^\nu\bar{\psi}_\mu$ gives
\[ AH= \vec{B}\cdot \vec{E}. \]
Contracting (\ref{Fierz}) with $(\gamma_{123})^\alpha_{\ \ \tilde{\alpha}}\psi^{\tilde{\alpha}}\bar{\psi}_\beta\psi^\nu\bar{\psi}_\mu$  gives
\[ AF= \vec{C} \cdot \vec{E}. \]
Contracting (\ref{Fierz}) with $(\gamma_{0})^\alpha_{\ \ \tilde{\alpha}}\psi^{\tilde{\alpha}}\bar{\psi}_\beta\psi^\nu\bar{\psi}_\mu$ gives
\[ AG=\vec{D}\cdot \vec{E}. \]
Contracting (\ref{Fierz}) with $(\gamma_{ij})^\alpha_{\ \ \tilde{\alpha}}\psi^{\tilde{\alpha}}\bar{\psi}_\beta\psi^\nu\bar{\psi}_\mu$ gives
\[ A\vec{E}=H\vec{B}+F\vec{C}+G\vec{D}. \]
In conclusion, we have
\begin{equation}\label{system_unsimplified}
\begin{split}
3A^2&=|\vec{B}|^2+|\vec{C}|^2+|\vec{D}|^2+|\vec{E}|^2+F^2+G^2+H^2\\
A\vec{B}&=-\vec{C}\times\vec{D}+H\vec{E}\ \ \ \ \ AH=\vec{B}\cdot \vec{E}  \\
A\vec{C}&=-\vec{D}\times\vec{B}+F\vec{E}\ \ \ \ \ AF=\vec{C}\cdot \vec{E}  \\
A\vec{D}&=-\vec{B}\times\vec{C}+G\vec{E}\ \ \ \ \ AG=\vec{D}\cdot  \vec{E}\\
A\vec{E}&=H\vec{B}+F\vec{C}+G\vec{D}.
\end{split}
\end{equation}

In the following calculations, we eliminate $\vec{E}, F, G, H$ from equation \eqref{system_unsimplified} to derive equation \eqref{def_V}. 

\begin{lemma}
Equation \eqref{system_unsimplified} implies
\begin{equation}
\begin{split}
H^2&=|\vec{B}|^2-\Delta^2\\
F^2&=|\vec{C}|^2-\Delta^2\\
G^2&=|\vec{D}|^2-\Delta^2.
\end{split}
\end{equation}
\end{lemma}
\begin{proof}
Starting with
\[
 A\vec{B} =-\vec{C}\times\vec{D}+H\vec{E}, 
\]
we take the inner product of both sides with $ \vec{B}$. We have
\[
 A\vec{B} \cdot \vec{B} =(-\vec{C}\times\vec{D}+H\vec{E} ) \cdot \vec{B}.
\]
Using  $AH=\vec{B}\cdot \vec{E}$ and the definition of $\Delta$, we conclude
\[H^2=|\vec{B}|^2-\Delta^2.  \]
The other two equations can be derived similarly.
\end{proof}
In particular, this implies 
\[ |\vec{B}|^2,  |\vec{C}|^2,\ |\vec{D}|^2\geq\Delta^2\geq 0. \]
Next we prove
\begin{lemma}
Equation \eqref{system_unsimplified} implies
\begin{equation}
\begin{split}
\vec{B}\cdot\vec{D}= & HG\\
\vec{C}\cdot\vec{D}=&GF\\
\vec{C}\cdot\vec{B}=&FH.
\end{split}
\end{equation}
\end{lemma}
\begin{proof} Starting with 
\[ A\vec{B} =-\vec{C}\times\vec{D}+H\vec{E}, \]
we take the inner product of both sides with $\vec{D}$ and use
$AG=\vec{D}\cdot \vec{E}$. We conclude that
\[ \vec{B}\cdot\vec{D}=  HG.  \]
The other two equations can be derived similarly.
\end{proof}
The second through the forth equations in equation \eqref{def_V} follows from the above two lemmas. For example,
\[
\begin{split}
(\vec{B}\cdot \vec{C})^2 = & F^2H^2 \\
=&(|\vec{B}|^2-\Delta^2)(|\vec{C}|^2-\Delta^2).
\end{split}
\]

Finally, we show that the first equation in \eqref{def_V} follows from equation \eqref{system_unsimplified}.
We start with $AF=\vec{C}\cdot\vec{E}$ and use also $ A\vec{E}=H\vec{B}+F\vec{C}+G\vec{D}$. We compute
 \begin{align*}
AF&=\vec{C}\cdot\vec{E}\\
&=\frac{1}{A}\left( F|\vec{C}|^2+G\vec{C}\cdot\vec{D}+H\vec{B}\cdot\vec{C} \right)\\
 &=\frac{F}{A}\left(F^2+G^2+H^2+\Delta^2 \right).
\end{align*}
As a result,
 \[  A^2=F^2+G^2+H^2+\Delta^2=|\vec{B}|^2+|\vec{C}|^2+|\vec{D}|^2-2\Delta^2. \] 

This finishes the proof of this part.
\bigskip

\noindent {\bf Part 2: $\mathcal{V}\subset\mathcal{S}$}

The proof of this direction consists of four lemmas.
\begin{lemma}\label{permutation}
$(A,\vec{B},\vec{C},\vec{D})\in \mathcal{V}$ if and only if $(A,\vec{C},\vec{D},\vec{B})\in \mathcal{V}$. 
$(A,\vec{B},\vec{C},\vec{D})\in \mathcal{S}$ if and only if $(A,\vec{C},\vec{D},\vec{B})\in \mathcal{S}$ 
\end{lemma}
\begin{proof}
The first statement is obvious in the view of \eqref{def_V}. For the second one, suppose $\psi=(w_1,w_2,u_1,u_2)\in\mathbb{C}^4$. Let $\tilde{\psi}=(iw_1+iu_1,iw_2+iu_2,w_1-u_1,w_2-u_2)/ \sqrt{2}$. Then by direct computation we have 
\begin{align*}
A(\psi)=A(\tilde{\psi}),&\ \ \vec{B}(\psi)=\vec{C}(\tilde{\psi})\\
\vec{C}(\psi)=\vec{D}(\tilde{\psi}),&\ \ \vec{D}(\psi)=\vec{B}(\tilde{\psi}).
\end{align*}
\end{proof} 
\begin{lemma}\label{rotation} We have the following two statements:

\begin{enumerate}
\item $(A,\vec{B},\vec{C},\vec{D})\in \mathcal{S}$ if and only if 
$(A,\cos \theta\vec{B}+\sin \theta\vec{C},-\sin\theta\vec{B}+\cos\theta\vec{C},\vec{D})\in \mathcal{S}$.
\item $(A,\vec{B},\vec{C},\vec{D})\in \mathcal{V}$ if and only if 
$(A,\cos \theta\vec{B}+\sin \theta\vec{C},-\sin\theta\vec{B}+\cos\theta\vec{C},\vec{D})\in \mathcal{V}$.
\end{enumerate}
\end{lemma}
\begin{proof}
The first statement follows that $\mathcal{S}$ is invariant under the isometry group of $AdS^{3,1}$ and that this transformation corresponds to the time translation. We begin to prove the second one. Since $\Delta^2,\ |\vec{B}|^2+\ |\vec{C}|^2$ and $|\vec{B}\times\vec{C} |^2$ are unchanged under the transformation, it's sufficient to show $(\vec{B}\cdot\vec{D})^2=(|\vec{B}|^2-\Delta^2)(|\vec{D}|^2-\Delta^2)$ is preserved.

We compute
\begin{align*}
&\left((\cos \theta\vec{B}+\sin \theta\vec{C})\cdot\vec{D}\right)^2\\
=&\cos^2\theta (\vec{B}\cdot\vec{D})^2+\sin^2\theta (\vec{C}\cdot\vec{D})^2+2\sin\theta\cos\theta(\vec{B}\cdot\vec{D})(\vec{C}\cdot\vec{D})
\end{align*}
and
\begin{align*}
&\left( |\cos\theta\vec{B}+\sin\theta\vec{C}|^2-\Delta^2 \right)(|\vec{D}|^2-\Delta^2)\\
=&\left(\cos^2\theta (|\vec{B}|^2-\Delta^2)+\sin^2\theta (|\vec{C}|^2-\Delta^2)+2\cos\theta\sin\theta\vec{B}\cdot\vec{C} \right)(|\vec{D}|^2-\Delta^2)\\
=&\cos^2\theta (\vec{B}\cdot\vec{D})^2+\sin^2\theta (\vec{C}\cdot\vec{D})^2+2\cos\theta\sin\theta\vec{B}\cdot\vec{C}(|\vec{D}|^2-\Delta^2).
\end{align*}
The lemma follows since $(\vec{B}\cdot\vec{D})^2(\vec{C}\cdot\vec{D})^2=(\vec{B}\cdot\vec{C})^2(|\vec{D}|^2-\Delta^2)^2$ and $(\vec{B}\cdot\vec{D})(\vec{C}\cdot\vec{D}),\ (\vec{B}\cdot\vec{C})$ have the same sign.
\end{proof} 
\begin{lemma}\label{zero}
Suppose $(A,\vec{B},\vec{C},\vec{D})\in\mathcal{V}$ and $\vec{B}=0$, then $(A,\vec{B},\vec{C},\vec{D})\in\mathcal{S}$.
\end{lemma}
\begin{proof}
We have $\Delta^2=0$,\ $A^2=|\vec{C}|^2+|\vec{D}|^2$. This implies that $\vec{C},\ \vec{D}$ are parallel. Picking $\psi=(1,\exp(-i\theta),\exp(i\theta),1)/2$,
\begin{align*}
&A(\psi),\vec{B}(\psi),\vec{C}(\psi),\vec{D}(\psi))\\
=&(1,(0,0,0),(0,-\cos\theta\sin\theta,\sin^2\theta),(0,\cos^2\theta,-\cos\theta\sin\theta)).
\end{align*}
Thus $(A,\vec{B},\vec{C},\vec{D})$ can be achieved by choosing suitable $\theta$ and applying Lemma \ref{rotation} if necessary.  
\end{proof}
\begin{lemma}\label{reduction}
If $(A_0,\vec{B_0},\vec{C_0},\vec{D_0})\in \mathcal{V}$ satisfies $|\vec{B}_0|^2=\Delta_0^2$, then it is also in $\mathcal{S}$.
\end{lemma}
\begin{proof}
In view of \eqref{def_V}, we know $\vec{B}_0\cdot\vec{C}_0=\vec{B}_0\cdot\vec{D}_0=0$. By dilation, rotation and Lemma \ref{rotation} we can assume 
\[(A_0,\vec{B_0},\vec{C_0},\vec{D_0})=(1,(-b,0,0),(0,c,-\gamma),(0,\mu,c)).\]
\eqref{def_V} becomes
\begin{align*}
1+b^2&=\mu^2+\gamma^2+2c^2\\
(\gamma-\mu)^2c^2&=(\mu^2+c^2-b^2)(\gamma^2+c^2-b^2)\\
b(\mu\gamma+c^2)&=b^2.
\end{align*}
From Lemma \ref{zero} we can assume $b\neq 0$. Eliminating $c^2$ in the first and the third lines we obtain a quadratic equation of $b$
\begin{align*}
1+b^2&=\mu^2+\gamma^2+2b-2\mu\gamma.
\end{align*}
We conclude $b=1\pm (\mu-\gamma)$ and $c^2=b-\mu\gamma=1-\mu\gamma\pm (\mu-\gamma)$.\\\\

\textbf{Case 1}: $\mu=\gamma$\\
In this case $\vec{C}_0,\ \vec{D}_0$ are orthogonal and $|\vec{B}_0|=|\vec{C}_0|=|\vec{D}_0|=1$. Pick $\psi=(-1,i,,i,-1)/2$ then
\[(A,\vec{B},\vec{C},\vec{D})=(1,(0,0,-1),(1,0,0),(0,1,0)).\]
Together with Lemma \ref{permutation} and Lemma \ref{rotation}, this situation can be obtained.\\\\
\textbf{Case 2}:\ $\mu < \gamma$\\
In this case from $b\leq 1$ we have $b=1+(\mu-\gamma)$. Let 
\begin{align*}
\alpha=\frac{\mu+\gamma}{\mu-\gamma},\ \beta=-\frac{2\textup{sgn}(c)}{\gamma-\mu}\sqrt{(1-\gamma)(1+\mu)}.
\end{align*}
Then $\psi=\frac{\sqrt{\gamma-\mu}}{2}(1,w_2,-\bar{w_2},-1)$ with $w_2^2=\alpha+i\beta$ has the desired image.\\\\
\textbf {Case 3}:\ $\mu> \gamma$\\
In this case form $b\leq 1$ we have $b=1- (\mu-\gamma)$. Assign
\begin{align*}
\alpha=\frac{\mu+\gamma}{\mu-\gamma},\ \ \beta=\frac{2\textup{sgn}(c)}{\mu-\gamma}\sqrt{(1+\gamma)(1-\mu)}
\end{align*}
and $\psi=\frac{\sqrt{\mu-\gamma}}{2}(1,w_2,\bar{w_2},1)$ with $w_2^2=\alpha+i\beta$ has the desired image.
\end{proof} 
Now we can show $\mathcal{V}\subset\mathcal{S}$. Given $(A,\vec{B},\vec{C},\vec{D})\in \mathcal{V}$. By dilation we can assume $A=1$. By Lemma  \ref{rotation} we can replace $\vec{B},\vec{C}$ by $\cos\theta\vec{B}+\sin\theta\vec{C},-\sin\theta\vec{B}+\cos\theta\vec{C}$. By choosing $\theta$ such that $\cos\theta (\vec{B}\cdot\vec{D})+\sin\theta (\vec{C}\cdot\vec{D})=0$, we can assume $\vec{B}\cdot\vec{D}=0$. From (\ref{def_V}) this implies either $|\vec{B}|^2=\Delta^2$ or $|\vec{D}|^2=\Delta^2$. From Lemma \ref{permutation} we can assume $|\vec{B}|^2=\Delta^2$. And it is done in Lemma \ref{reduction}.
\end{proof}

\section{Spinor Killing fields vs. Observer Killing fields}
We study the relation between the two subsets $\mathcal{O}$ and $\mathcal{S}$ of Killing fields in this section. The following lemma for the subset
$\mathcal{O}$ is needed in the proof of Theorem 

\begin{lemma}\label{causal}
For any $(A,\vec{B},\vec{C},\vec{D})\in\mathcal{O}$ we have
\[  \min_{|\vec{x}|=1} A^2 -(\vec{B}\cdot\vec{x})^2-(\vec{C}\cdot\vec{x})^2-(\vec{D}\cdot\vec{x})^2\geq 0. \]
The equality holds if and only if $(A,\vec{B},\vec{C},\vec{D})$ is null. And in this case, the minimum is attained only if $\vec{D}\cdot\vec{x}=0$.
\end{lemma}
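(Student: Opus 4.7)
The plan is to translate the lemma into a spectral bound on a $3 \times 3$ symmetric positive semidefinite matrix, and to exploit the orthogonality enforced by Proposition~\ref{observer_constraint}: since $A \vec{D} = -\vec{B} \times \vec{C}$ with $A > 0$, dotting with $\vec{B}$ and $\vec{C}$ immediately gives $\vec{D} \cdot \vec{B} = \vec{D} \cdot \vec{C} = 0$. I would rewrite
\[
(\vec{B} \cdot \vec{x})^2 + (\vec{C} \cdot \vec{x})^2 + (\vec{D} \cdot \vec{x})^2 = \vec{x}^{T} M \vec{x}, \qquad M := \vec{B} \vec{B}^{T} + \vec{C} \vec{C}^{T} + \vec{D} \vec{D}^{T},
\]
so that the lemma reduces to showing $\lambda_{\max}(M) \leq A^2$, with the equality case and the location of the minimizer to be tracked separately.

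The orthogonality $\vec{B}, \vec{C} \perp \vec{D}$ makes $M$ block-diagonal in the splitting $\mathbb{R}^3 = \operatorname{span}(\vec{D}) \oplus \vec{D}^{\perp}$: the eigenvalue along $\vec{D}$ is $|\vec{D}|^2$, while the restriction $M'$ to $\vec{D}^{\perp}$ has trace $|\vec{B}|^2 + |\vec{C}|^2$ and determinant $|\vec{B} \times \vec{C}|^2 = A^2 |\vec{D}|^2$, with eigenvalues
\[
\lambda_{\pm} = \tfrac{1}{2}\bigl[(|\vec{B}|^2 + |\vec{C}|^2) \pm \sqrt{(|\vec{B}|^2 + |\vec{C}|^2)^2 - 4 A^2 |\vec{D}|^2}\bigr].
\]
Since $\lambda_{+} \geq \sqrt{\lambda_{+} \lambda_{-}} = A |\vec{D}|$ and Proposition~\ref{observer_constraint} gives $A > |\vec{D}|$, one obtains $\lambda_{+} \geq |\vec{D}|^2$, so $\lambda_{\max}(M) = \lambda_{+}$.

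Next I would verify $\lambda_{+} \leq A^2$. After moving $|\vec{B}|^2 + |\vec{C}|^2$ across and squaring (valid because the right-hand side is positive by $A > \max\{|\vec{B}|, |\vec{C}|\}$), the inequality collapses to
\[
|\vec{B}|^2 + |\vec{C}|^2 \leq A^2 + |\vec{D}|^2,
\]
which is precisely the nonnegativity of the Killing form $A^2 + |\vec{D}|^2 - |\vec{B}|^2 - |\vec{C}|^2$ of $K$. For $K \in \mathcal{O}$ this form equals $1$, so the inequality is strict, and more generally equality holds if and only if $K$ is null, matching the claimed equality condition of the lemma.

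For the minimizer-location statement in the equality case, the strict separation $\lambda_{+} > |\vec{D}|^2$ still holds whenever $A > |\vec{D}|$ and $\vec{D} \neq 0$ (since then $\lambda_{+} \geq A|\vec{D}| > |\vec{D}|^2$), so the $\lambda_{+}$-eigenspace of $M$ lies entirely in $\vec{D}^{\perp}$ and every minimizer $\vec{x}$ satisfies $\vec{D} \cdot \vec{x} = 0$. The degenerate case $\vec{D} = 0$ forces $\vec{B} \times \vec{C} = 0$ and makes the conclusion trivial. The only real obstacle is the algebraic bookkeeping in the squaring step, which collapses neatly to the Killing-form condition once the observer constraint is substituted.
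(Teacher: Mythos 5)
Your argument is correct, and at bottom it rests on the same two facts as the paper's proof: the hypersurface\-/orthogonality constraint $A\vec{D}=-\vec{B}\times\vec{C}$ forces $\vec{D}\perp\vec{B},\vec{C}$, which reduces the maximization to the plane $\vec{D}^{\perp}$, and the final bound is exactly the Killing\-/form inequality $A^2+|\vec{D}|^2\ge|\vec{B}|^2+|\vec{C}|^2$. The difference is in execution: the paper normalizes to $(1,(b,0,0),(c_1,c_2,0),(0,0,d))$ by rotation and dilation, parametrizes the boundary circle by an angle $\theta$, and bounds an expression of the form $P\cos 2\theta+Q\sin 2\theta+R$ by $\sqrt{P^2+Q^2}+R$; that amplitude is precisely your $\lambda_{+}$ of the $2\times 2$ block, so your spectral formulation is a coordinate-free repackaging of the same computation. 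What your version buys is transparency: the determinant identity $\det M'=|\vec{B}\times\vec{C}|^2=A^2|\vec{D}|^2$ isolates where the observer constraint enters, and the legitimate squaring step shows that $\lambda_{+}\le A^2$ is \emph{equivalent} to the nonnegativity of the Killing form, which makes the equality clause immediate rather than requiring one to track when each trigonometric estimate is saturated. One caveat, which you inherit from the paper rather than introduce: for $K\in\mathcal{O}$ proper the Killing form equals $1$, so your bound is strict and the ``equality iff null'' and ``$\vec{D}\cdot\vec{x}=0$'' clauses are only meaningful on the closure of the cone over $\mathcal{O}$; in the extreme boundary case $A=|\vec{D}|$ (which forces $|\vec{B}|=|\vec{C}|=A$ and $\vec{B}\perp\vec{C}$, hence $M=A^2 I$) every unit $\vec{x}$ minimizes and the $\vec{D}\cdot\vec{x}=0$ conclusion fails for both your proof and the paper's --- harmlessly, since the lemma is only ever applied to genuine elements of $\mathcal{O}$, where $A>|\vec{D}|$ holds and your eigenspace argument is airtight.
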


\begin{proof}
Up to rotation and dilation, we may assume 
\[(A,\vec{B},\vec{C},\vec{D})=(1,(b,0,0),(c_1,c_2,0),(0,0,d)).\]
Let $\vec{x}=(\alpha,\beta,\gamma)$, then
\begin{align*}
(\vec{B}\cdot\vec{x})^2+(\vec{C}\cdot\vec{x})^2+(\vec{D}\cdot\vec{x})^2&=(\alpha b)^2+(\alpha c_1+\beta c_2)^2+(\gamma d)^2\\
&=(b^2+c_1^2-d^2)\alpha^2+(c_2^2-d^2)\beta^2+2c_1c_2\alpha\beta+d^2.
\end{align*}
In view of $|\vec{D}|\leq |\vec{B}||\vec{C}|$ and $|\vec{B}|,|\vec{C}|\leq 1$ we have $|\vec{D}|\leq |\vec{B}|,\ |\vec{C}|$. Then the above expression attends its maximum only if $\gamma=0$. We can denote $\alpha=\cos\theta,\ \beta=\sin\theta$. Then above expression becomes
\begin{align*}
&(b^2+c_1^2-d^2)\cos^2\theta+(c_2^2-d^2)\sin^2\theta+2c_1c_2\cos\theta\sin\theta+d^2\\
=&\left(\frac{b^2+c_1^2-c_2^2}{2}\right)\cos 2\theta+c_1c_2\sin 2\theta+\frac{b^2+c_1^2+c_2^2}{2}\\
\leq &\sqrt{\left(\frac{b^2+c_1^2-c_2^2}{2}\right)^2+c_1^2c_2^2}+\frac{b^2+c_1^2+c_2^2}{2}\\
\leq &1.
\end{align*}
The last inequality comes from $1+d^2\geq b^2+c_1^2+c_2^2$ and $d=bc_2$.
\end{proof}

We prove Theorem \ref{convex} which gives the precise relation between the sets  $ \mathcal{O}$ and $\mathcal{S}$.

\bigskip
\noindent {\bf Theorem \ref{convex} } {\it
The convex hull of  $ \mathcal{O}$ is properly contained in that of $\mathcal{S}$.}

\bigskip
\begin{proof} We show that each  $y \in \mathcal{O}$ can be written as a linear combination of elements in $ \mathcal{S}$ with positive coefficients. 
We look at two cases depending on whether $\vec{D}$ vanishes. 

If $\vec{D}=0$ then we may assume that $y\in \mathcal{O}$ is of the form $(A, b,0,0, c, 0, 0, 0, 0, 0)$ where $A>0 $ and $A^2=b^2+c^2+1$. However, this is clearly in the convex null of   $\mathcal{S}$ since  $\mathcal{S}$ contains all vector of $(A, b,0,0, c, 0, 0, 0, 0, 0)$ where $A>0 $ and $A^2=b^2+c^2$.

If $\vec{D}$ is not zero, we may assume that $y\in \mathcal{O}$ is of the form $(A, 1,0,0, c_1, c_2, 0, 0, 0, d)$ where $Ad=-c_2$,
\[  A^2+d^2 -1-c_1^2-c_2^2 = 1.  \]
and $A > 1> c_1^2+c_2^2$. We may also assume $1>d>0$.

We observe that $(A, \vec{0}, \vec{0}, \vec{D})\in \mathcal{S}$ if $A=|\vec{D}|$.  As a result, any $(A, \vec{0}, \vec{0}, \vec{D})$ with $A > |\vec{D}|$ is in the convex hull of $ \mathcal{S}$. It suffices to find $\alpha > 0 $ and $|e|<1$ such that $(A-\alpha, 1,0,0, c_1, c_2, 0, 0, 0, d- \alpha e)$ is in $ \mathcal{S}$. 

Note that for $\vec{D}$ perpendicular to both $\vec{B}$ and $\vec{C}$, the defining equations for  $ \mathcal{S}$ reduces to 
\[\begin{split} A^2&=|\vec{B}|^2+|\vec{C}|^2 -|\vec{D}|^2\\
(\vec{B}\cdot \vec{C})^2&=(|\vec{B}|^2-\Delta^2)(|\vec{C}|^2-\Delta^2)\\
|\vec{D}|^2 & = -\vec{B}\cdot (\vec{C}\times \vec{D})/A.
\end{split}\]
In terms of $A$, $D$, $c_i$, $e$ and $\alpha$, the defining equations reads
\[\begin{split}
(\frac{c_2}{d- \alpha e})^2 = 1+c_1^2+c_2^2 + (d- \alpha e) ^2\\
d-\alpha e = - \frac{c_2}{A- \alpha}.
\end{split}\]
We only have two remaining equations since the first two equations both reduce to the first one. If we plug in $d= -\frac{c_2}{A}$ to the second equation, we have
\begin{align}
\label{adjust1}(\frac{c_2}{d- \alpha e})^2 = 1+c_1^2+c_2^2 + (d- \alpha e) ^2\\
\label{adjust2} \alpha e = \frac{c_2 \alpha}{A(A- \alpha)}.
\end{align}
We solve $ \alpha e $ from equation \eqref{adjust1} and then solve $\alpha$ from equation \eqref{adjust2}. We have to make sure that the solution exist  and the obtained solution satisfy $|\alpha e| < \alpha$ and $A - \alpha > 0$. 

Equation \eqref{adjust1} can be solved by applying intermediate value theorem. Consider the function $f$ 
\[  f(x) =(\frac{c_2}{d- x})^2 - 1-c_1^2-c_2^2 - (d- x) ^2.  \]
We have $f(0)=1> 0$ and $f(d-1)= -2 -c_1^2 < 0$. Hence, there is a solution of  equation \eqref{adjust1} $\alpha e$ between $d-1$ and $0$. With this solution, we can solve from equation \eqref{adjust2} that 
\[  \alpha = \frac{A^2 \alpha e}{A \alpha e + c_2}.\]
The denominator is non-zero since both $\alpha e$ and $c_2$ are negative. It is also clear from the expression that $\alpha$ is positive. Moreover, from
\[ A- \alpha = \frac{c_2 \alpha}{A \alpha e}, \]
we conclude that $A-\alpha >0$. 

Finally, 
\[  \frac{|\alpha e|}{\alpha} = \frac{1}{A} \frac{|c_2|}{A-\alpha}<1 \]
since $A >1$ and 
\[ (\frac{c_2}{A-\alpha})^2 =1+c_1^2+c_2^2 + (d- \alpha e) ^2 > 1. \]

This shows that the convex hull of  $\mathcal{O}$ is contained in that of  $\mathcal{S}$. To show that it is properly contained, it suffices to find a vector in  $\mathcal{S}$ which does not lie in the convex hull of  $\mathcal{O}$.  Let $\psi=(1,0,1+i,0)\in \mathbb{C}^4$ be the value of an imaginary Killing spinor at the base point $o$, the corresponding Killing field is
\[(A_0,\vec{B}_0,\vec{C}_0,\vec{D}_0)=(3,(-1,0,0),(-2,0,0),(2,0,0)).\]
For $\vec{x}_0=(1,0,0)$, we have
\[A_0^2= (\vec{B_0}\cdot\vec{x_0})^2+(\vec{C}_0\cdot\vec{x}_0)^2+(\vec{D}_0\cdot\vec{x}_0)^2. \]
Suppose $(A_0,\vec{B}_0,\vec{C}_0,\vec{D}_0)$ is contained in the convex hull of $\mathcal{O}$.
\[(A_0,\vec{B}_0,\vec{C}_0,\vec{D}_0)=\sum_{j}\lambda_j (A_j,\vec{B}_j,\vec{C}_j,\vec{D}_j),\ \lambda_j> 0,\ (A_j,\vec{B}_j,\vec{C}_j,\vec{D}_j)\in\mathcal{O} .\]
From Lemma \ref{causal}, we have $(A_j,\vec{B}_j\cdot\vec{x}_0,\vec{C}_j\cdot\vec{x}_0,\vec{D}_j\cdot\vec{x}_0)$ are future causal but $(A_0,\vec{B}_0\cdot\vec{x}_0,\vec{C}_0\cdot\vec{x}_0,\vec{D}_0\cdot\vec{x}_0)$ is null. Thus $(A_j,\vec{B}_j\cdot\vec{x}_0,\vec{C}_j\cdot\vec{x}_0,\vec{D}_j\cdot\vec{x}_0)$ are also null and $\vec{D}_j\cdot\vec{x}_0=0$ from Lemma \ref{causal}. But $\vec{D}_0\cdot\vec{x}_0=2\neq 0$. It's a contradiction. This finishes the proof of the theorem.
\end{proof}


\section{The observer energy and the rest mass of an asymptotically AdS spacetime.}

For an asymptotically AdS initial data, let $e$ be the total energy and $\vec{p}$, $\vec{c}$ and $\vec{j}$ be the total linear momentum, center of mass and angular momentum, respectively. (See  \cite{Chrusciel-Maerten-Tod} for the details on the asymptotic assumptions and the definition of the total conserved quantities.) These quantities are associated with the Killing fields of the background AdS spacetime. Precisely
\begin{align*}
y^0\frac{\partial}{\partial y^4}-y^4\frac{\partial}{\partial y^0}\ \longmapsto\  e\ ,&\ y^0\frac{\partial}{\partial y^k}+y^k\frac{\partial}{\partial y^0}\ \longmapsto\ p_k \\
y^4\frac{\partial}{\partial y^k}+y^k\frac{\partial}{\partial y^4}\ \longmapsto\ c_k,&\ \ \ \ \ \ \ \ \epsilon_{kqr} y^q\frac{\partial}{\partial y^r}\ \longmapsto\ j_k .
\end{align*} 
  The following positive energy theorem is proved in  \cite{Chrusciel-Maerten-Tod}.
\begin{theorem}\label{rigidity_Chrusciel}
For an asymptotically AdS initial data satisfying the dominant energy condition with conserved quantities $(e, \vec{p}, \vec{c}, \vec{j})$ and any spinor Killing vector field $K=(A, \vec{B}, \vec{C}, \vec{D})$ in $\mathcal{S}$,
\[  Ae+ \vec{B} \cdot \vec{p} + \vec{C} \cdot \vec{c} + \vec{D} \cdot \vec{j} \ge 0. \]
In particular, this implies $e \ge |\vec{p}|$ and as $\vec{p}=0$
\begin{equation} 
e \geq \sqrt{|\vec{c}|^2+ |\vec{j}|^2+2|\vec{c} \times \vec{j}|}.  
\end{equation}
Moreover,  the data comes from a hypersurface in AdS if any of the followings holds.
\begin{enumerate}
\item There are two linearly independent imaginary Killing spinors along the initial data.
\item $(e,\vec{p})$ is null.
\item The equality in equation \eqref{rigidity_Chrusciel} holds and  $\vec{c} \times \vec{j} =0$.
\end{enumerate}
\end{theorem}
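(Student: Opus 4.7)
My plan is to adapt Witten's spinor argument to the AdS setting. For each $\psi \in \mathbb{C}^4$, I would solve the modified Dirac--Witten equation $\widehat{D}\sigma = 0$ on $(M, g, k)$ with asymptotic value $\psi$, where $\widehat{D}$ is the spatial Dirac operator twisted by the extrinsic curvature $k$ and by the mass correction coming from the imaginary Killing spinor condition $\nabla_X \sigma = \tfrac{i}{2} X.\sigma$. Existence and uniqueness would follow from elliptic theory on weighted function spaces modeled on the asymptotically hyperbolic end. The integrated Weitzenb\"ock identity then reads
\[
\int_M \bigl(|\widehat{\nabla}\sigma|^2 + \mathcal{R}(\sigma,\sigma)\bigr)\, dv = \oint_{\partial_\infty M} \mathrm{(boundary\ term)},
\]
where $\mathcal{R}(\sigma,\sigma) \ge 0$ by the dominant energy condition and the boundary term, evaluated in the conformal chart of Section 2 using the dictionary \eqref{coefficients}, is precisely $Ae + \vec{B}\cdot\vec{p} + \vec{C}\cdot\vec{c} + \vec{D}\cdot\vec{j}$. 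This establishes the main inequality for every spinor Killing field.

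\textbf{Subsidiary inequalities.} To obtain $e \ge |\vec{p}|$, I would restrict to spinor Killing fields with $\vec{C} = \vec{D} = 0$, for which \eqref{def_V} gives $\Delta = 0$ and $A = |\vec{B}|$; choosing $\vec{B}$ antiparallel to $\vec{p}$ then yields the bound. For the $\vec{p} = 0$ case, the problem reduces to maximizing the linear functional $-(\vec{C}\cdot\vec{c}+\vec{D}\cdot\vec{j})/A$ over $\mathcal{S}$. Using the symmetries from Lemma \ref{permutation} and Lemma \ref{rotation}, I can assume $\vec{c}$ and $\vec{j}$ lie in a fixed two-plane; a Lagrange multiplier computation against the defining equations \eqref{def_V}, in particular the constraint $A^2 = |\vec{B}|^2+|\vec{C}|^2+|\vec{D}|^2 - 2\Delta^2$ together with the rank-one relations $(\vec{B}\cdot\vec{C})^2 = (|\vec{B}|^2 - \Delta^2)(|\vec{C}|^2 - \Delta^2)$ and the two companions, should yield the sharp value $\sqrt{|\vec{c}|^2+|\vec{j}|^2+2|\vec{c}\times\vec{j}|}$. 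Identifying the precise configuration in $\mathcal{S}$ that saturates the cross-product term is the step I expect to carry most of the computational weight.

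\textbf{Rigidity.} In each of the three cases, the goal is to upgrade boundary equality to enough parallel spinors to recover $AdS^{3,1}$. In case (1), the hypothesis directly gives two independent $\sigma$ with $\widehat{\nabla}\sigma = 0$; evaluating $[\widehat{\nabla}_X,\widehat{\nabla}_Y]\sigma$ on this two-dimensional family pins down the curvature tensor and matches it to that of $AdS^{3,1}$. In case (2), null $(e,\vec{p})$ makes the main inequality an equality for the spinor Killing fields with $\vec{C}=\vec{D}=0$ and $\vec{B}$ antiparallel to $\vec{p}$; this forces $\widehat{\nabla}\sigma = 0$ and $\mathcal{R}(\sigma,\sigma)=0$ for a full complex line of asymptotic values, and rotating the phase of $\psi$ within the null set $A=|\vec{B}|$ produces the second independent solution, reducing to case (1). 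Case (3) is handled similarly: the vanishing of $\vec{c}\times\vec{j}$ enlarges the locus of extremal spinor Killing fields saturating the main inequality to a positive-dimensional family, again yielding two independent parallel $\sigma$'s. The principal obstacle throughout is the bootstrap from a single parallel spinor to a two-dimensional family, which requires exploiting the algebraic structure of $\mathcal{S}$ developed in Section 4.
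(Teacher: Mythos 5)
You are attempting to prove a statement that the paper itself does not prove: Theorem \ref{rigidity_Chrusciel} is imported verbatim from Chru\'sciel--Maerten--Tod, and the paper's entire ``proof'' is the sentence ``The following positive energy theorem is proved in \cite{Chrusciel-Maerten-Tod}.'' So there is no internal argument to compare against; what can be assessed is whether your sketch would actually reconstruct the cited result. The first two blocks are broadly on the right track: the Dirac--Witten/Weitzenb\"ock strategy with the imaginary-Killing-spinor connection is exactly how the main inequality is obtained, and your derivation of $e\ge|\vec p|$ is sound, since $(A,\vec B,\vec 0,\vec 0)$ with $A=|\vec B|$ does satisfy \eqref{def_V} and hence lies in $\mathcal S$. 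But the refined inequality for $\vec p=0$ is left as an unexecuted Lagrange-multiplier computation over the variety $\mathcal S$, which is the hard quantitative content of the theorem. The efficient route (and the one in \cite{Chrusciel-Maerten-Tod}) is to observe that the boundary term is a Hermitian quadratic form $Q(\psi)=\langle(e-p_j\gamma_0\gamma_j+ic_k\gamma_k+\tfrac{i}{2}j_m\epsilon^{mkl}\gamma_0\gamma_k\gamma_l)\psi,\psi\rangle$ on $\mathbb C^4$ and to compute its eigenvalues directly; positive semi-definiteness of $Q$ is then literally equivalent to $e\ge\sqrt{|\vec c|^2+|\vec j|^2+2|\vec c\times\vec j|}$ when $\vec p=0$. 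Optimizing only over the image $\mathcal S$ of the quadratic map is a harder detour, and you have not shown it closes.

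The more serious gap is in rigidity case (2). Rotating the phase of $\psi$ produces $e^{i\theta}\psi$, which is complex-linearly \emph{dependent} on $\psi$, so it cannot supply the second independent imaginary Killing spinor you need to reduce to case (1). The correct mechanism is the rank of $\ker Q$: if $(e,\vec p)$ is null and nonzero one may boost so that $\vec c=\vec j=0$, and then $Q=e(1-\hat p_j\gamma_0\gamma_j)$ is $2e$ times a rank-two projection, so $\ker Q$ is two-complex-dimensional; since $Q\ge 0$, every asymptotic value in $\ker Q$ forces the corresponding Witten solution to be $\widehat\nabla$-parallel, yielding two independent Killing spinors. Case (3) requires the analogous identification of a two-dimensional kernel when equality holds and $\vec c\times\vec j=0$, which you assert but do not exhibit. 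Finally, in case (1) itself, passing from two $\widehat\nabla$-parallel spinors to ``the data embeds in $AdS^{3,1}$'' is not just a matter of evaluating $[\widehat\nabla_X,\widehat\nabla_Y]\sigma$: a single parallel spinor only forces certain curvature contractions to annihilate it, and one must show that the two-dimensional family pins down the full Gauss--Codazzi data of a hypersurface in AdS. As written, the proposal is a plausible roadmap that matches the cited proof in outline, but the quantitative inequality and all three rigidity reductions contain unproved (and in case (2), incorrectly argued) steps.
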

\begin{remark}
In \cite{Chrusciel-Maerten-Tod}, two additional rigidity conditions are provided. For simplicity, we only state the ones that will be used later in this article.
\end{remark}
As an immediate consequence of Theorem \ref{convex}, 
\begin{corollary}
Let $(M,g,k)$ be an asymptotically AdS initial data satisfying the dominant energy condition and $e$, $\vec{p}$, $\vec{c}$ and $\vec{j}$  be the total energy, linear momentum, center of mass, and angular momentum as defined in \cite{Chrusciel-Maerten-Tod}. Then for any observer Killing field $K=(A,\vec{B},\vec{C},\vec{D})\in \mathcal{O}$,
\[  Ae+ \vec{B} \cdot \vec{p} + \vec{C} \cdot \vec{c} + \vec{D} \cdot \vec{j} \ge 0 \]
\end{corollary}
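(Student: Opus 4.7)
The plan is to deduce the Corollary from Theorem \ref{rigidity_Chrusciel} and Theorem \ref{convex} by a simple linearity argument. The functional
\[\Phi(K) := Ae + \vec B\cdot\vec p + \vec C\cdot\vec c + \vec D\cdot\vec j\]
is a \emph{linear} function of $K=(A,\vec B,\vec C,\vec D) \in \underline{so}(3,2)$, since the conserved quantities $(e,\vec p,\vec c,\vec j)$ are fixed attributes of the initial data set and do not depend on $K$. Theorem \ref{rigidity_Chrusciel} states precisely that $\Phi \ge 0$ on $\mathcal S$.

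Next I would invoke Theorem \ref{convex}, or more directly the content of its proof. There it is shown that every $K \in \mathcal O$ admits a decomposition
\[K = \sum_{j} \lambda_j \, K_j, \qquad \lambda_j > 0, \quad K_j \in \mathcal S,\]
the case $\vec D = 0$ being immediate, and the case $\vec D \neq 0$ being handled by subtracting a suitable element of $\mathcal S$ of the form $(\alpha,\vec 0,\vec 0,\alpha\vec e)$ with $|\vec e|<1$. Applying $\Phi$ to both sides and using linearity yields
\[\Phi(K) = \sum_j \lambda_j \, \Phi(K_j) \ge 0,\]
since each $\Phi(K_j)\ge 0$ by Theorem \ref{rigidity_Chrusciel} and each $\lambda_j > 0$. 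This is the desired inequality for observer Killing fields.

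In this sense there is no genuine obstacle in the argument for the Corollary itself: the essential work has been absorbed into the proofs of the two previous theorems. Theorem \ref{convex} places $\mathcal O$ inside the conical hull of $\mathcal S$, and Theorem \ref{rigidity_Chrusciel} provides nonnegativity of $\Phi$ on the generating set $\mathcal S$; linearity of $\Phi$ in $K$ then propagates the inequality to all of $\mathcal O$ for free. If anything, the only point worth emphasizing is that what is actually used is the \emph{conical} (positive-coefficient) hull inclusion, which is what the proof of Theorem \ref{convex} establishes and which is strictly stronger than the convex-hull statement.
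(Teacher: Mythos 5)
Your argument is correct and is precisely the paper's intended (and only implicit) reasoning: the corollary is stated there as "an immediate consequence of Theorem \ref{convex}," meaning exactly the linearity of the pairing $K\mapsto Ae+\vec B\cdot\vec p+\vec C\cdot\vec c+\vec D\cdot\vec j$ combined with the positive-coefficient decomposition of each element of $\mathcal{O}$ into elements of $\mathcal{S}$ established in the proof of Theorem \ref{convex} and the nonnegativity on $\mathcal{S}$ from Theorem \ref{rigidity_Chrusciel}. Your remark that the conical-hull inclusion is what is really used is a fair observation, though since $\mathcal{S}$ is invariant under positive scaling (rescale the spinor), its convex and conical hulls coincide, so the distinction is immaterial here.
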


From the above positivity, we define the rest mass of an asymptotically AdS initial data as follows:
\begin{definition}\label{rest_mass}
The rest mass $m$ of an asymptotically AdS initial data  with conserved quantities $(e, \vec{p}, \vec{c}, \vec{j})$ is defined to be
\[ m= \inf_{(A, \vec{B}, \vec{C}, \vec{D}) \in \mathcal{O}} Ae+ \vec{B} \cdot \vec{p} + \vec{C} \cdot \vec{c} + \vec{D} \cdot \vec{j}, \] where $\mathcal{O}$ is the set of observer Killing fields.
\end{definition}

The conserved quantities $(e,\vec{p},\vec{c},\vec{j})$ can be considered as an element in $\underline{so}(3,2)^*$, the dual space of the Lie algebra $\underline{so}(3,2)$. $SO(3,2)$ is a semi-simple Lie group and the Killing form is non-degenerate. In terms of the coordinate we used, the Killing form is $B(K,K)=6(-A^2-|\vec{D}|^2+|\vec{B}|^2+|\vec{C}|^2)$. Thus $\frac{1}{6}B(.,.)$ induces an isomorphism $\underline{so}(3,2)^*\cong\underline{so}(3,2)$. For an element $K^*\in \underline{so}(3,2)^*$, denote \[\alpha(K^*)=-\frac{1}{6}\textup{Tr}(ad_{K^*} \mathord{\cdot} ad_{K^*})\] and \[\beta(K^*)=-\frac{1}{3}\textup{Tr}(ad_{K^*}\mathord{\cdot} ad_{K^*}\mathord{\cdot} ad_{K^*}\mathord{\cdot} ad_{K^*})+\frac{1}{12}\left(\textup{Tr}(ad_{K^*}\mathord{\cdot} ad_{K^*})\right)^2.\] 

\begin{remark}
One observes that the Killing form $B$ is related to the timelike condition in Lemma \ref{lemma_timelike} for Killing vector fields, and to the positivity result stated in the following theorem for conserved quantities.
\end{remark}
\begin{remark}
Different notions of the rest mass were proposed by other authors (such as \cite{Wang-Xie-Zhang}). They were  proved to be positive. However, it is not clear that these definitions are physically relevant. 
\end{remark}
\begin{theorem}
For $(e, \vec{p}, \vec{c}, \vec{j})$ arising from  an asymptotically AdS initial data satisfying the dominant energy condition, denote
\begin{equation}\label{alpha_beta}
\begin{aligned}
\alpha&=e^2+|\vec{j}|^2-|\vec{p}|^2-|\vec{c}|^2\\
\beta&=(e^2-|\vec{j}|^2-|\vec{p}|^2-|\vec{c}|^2)^2-4|\vec{j}\times\vec{p}|^2-4|\vec{p}\times\vec{c}|^2-4|\vec{c}\times\vec{j}|^2+8e\vec{c}\cdot (\vec{p}\times\vec{j}).
\end{aligned}
\end{equation}
Then $\alpha$ and $\beta$ are both non-negative and the rest mass $m$ of $(e, \vec{p}, \vec{c}, \vec{j})$ is of the form
\begin{align}
m^2&=\frac{1}{2}(\alpha+\sqrt{\beta}).
\end{align}
\end{theorem}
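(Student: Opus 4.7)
The plan is to exploit the $SO(3,2)$-invariance of the problem. The set $\mathcal{O}$ is invariant under the adjoint action of $SO(3,2)$ on $\underline{so}(3,2)$ (hypersurface orthogonality, future timelike-ness, and unit Killing-form norm are all isometry invariants), and the pairing $K\mapsto Ae+\vec{B}\cdot\vec{p}+\vec{C}\cdot\vec{c}+\vec{D}\cdot\vec{j}$ is the natural duality, hence equivariant. It follows that the rest mass $m$ depends only on the coadjoint orbit of $(e,\vec{p},\vec{c},\vec{j})\in \underline{so}(3,2)^*$. The invariants $\alpha$ and $\beta$ are, by construction, the two generators of the Casimir algebra of $SO(3,2)$. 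It therefore suffices to verify the theorem on a convenient representative of each orbit.

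My canonical representative is the compact Cartan element
\[ (E,\vec{0},\vec{0},(0,0,J)), \qquad E\geq J\geq 0, \]
in the Cartan subalgebra spanned by the time translation $y^0\partial_{y^4}-y^4\partial_{y^0}$ and the rotation $y^1\partial_{y^2}-y^2\partial_{y^1}$. To reach it I would first apply a boost to eliminate $\vec{p}$ (possible since Theorem~\ref{rigidity_Chrusciel} gives $e\geq|\vec{p}|$; the equality case forces AdS data, for which $m=\alpha=\beta=0$), then combine further rotations and $(y^4,y^i)$-boosts with the inequality $e^2\geq|\vec{c}|^2+|\vec{j}|^2+2|\vec{c}\times\vec{j}|$ to eliminate $\vec{c}$ and align $\vec{j}$ with the $y^3$-axis; the same inequality forces $E\geq J$. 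Equivalently, positivity forces the $5\times 5$ matrix representing $(e,\vec{p},\vec{c},\vec{j})\in\underline{so}(3,2)$ to be elliptic with spectrum $\{0,\pm iE,\pm iJ\}$, and any such semisimple element is $SO(3,2)$-conjugate into the compact Cartan. Substituting the canonical form into \eqref{alpha_beta} then yields $\alpha=E^2+J^2$ and $\beta=(E^2-J^2)^2$, both manifestly non-negative, so by invariance $\alpha,\beta\geq 0$ for all positive-energy data.

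To finish, I would compute $m$ in the canonical form by minimizing $L(K)=AE+D_3J$ over $\mathcal{O}$. Proposition~\ref{observer_constraint} gives $A|\vec{D}|=|\vec{B}\times\vec{C}|\leq|\vec{B}||\vec{C}|$ and $A^2+|\vec{D}|^2=1+|\vec{B}|^2+|\vec{C}|^2$; combining with $|\vec{B}|^2+|\vec{C}|^2\geq 2|\vec{B}||\vec{C}|$ yields $(A-|\vec{D}|)^2\geq 1$ and hence $A\geq 1+|\vec{D}|\geq 1+|D_3|$. Therefore
\[ L(K)\geq AE-(A-1)J=E+(A-1)(E-J)\geq E, \]
with equality at $\vec{B}=\vec{C}=\vec{D}=\vec{0}$, $A=1$, i.e.\ at $K=\partial/\partial t\in\mathcal{O}$. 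This gives $m=E$, so
\[ m^2=E^2=\tfrac{1}{2}\bigl((E^2+J^2)+(E^2-J^2)\bigr)=\tfrac{1}{2}\bigl(\alpha+\sqrt{\beta}\bigr). \]
The main technical obstacle is the canonical-form reduction in paragraph two: the spectral picture is transparent, but rigorously executing the reduction on degenerate orbits—where the two eigenvalues coalesce or the matrix becomes nilpotent on the rigidity boundary of Theorem~\ref{rigidity_Chrusciel}—requires a careful case analysis. Once the reduction is in hand, the remaining computation is essentially the one-line inequality $(A-|\vec{D}|)^2\geq 1$ above.
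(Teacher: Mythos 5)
Your proposal is essentially correct but follows a genuinely different route from the paper. The paper only uses the $SO(3,2)$-invariance to boost $\vec{p}$ to zero (Case 2), and then, with $\vec{c}$ and $\vec{j}$ both still present, carries out an explicit multi-stage optimization over $\mathcal{O}$: first over the angle between $\vec{B}$ and $\vec{C}$, then over the directions of $\vec{C},\vec{D}$ in the plane of $\vec{c},\vec{j}$, then over the magnitudes via a substitution $d=\eta\cos\phi$ and a final one-variable minimization in $\phi$. You instead push the entire difficulty into the group theory: conjugating the coadjoint orbit all the way into the compact Cartan, i.e.\ to $(E,\vec 0,\vec 0,(0,0,J))$ with $E\ge J\ge 0$, after which the minimization collapses to the clean chain $A\lvert\vec D\rvert=\lvert\vec B\times\vec C\rvert\le\lvert\vec B\rvert\lvert\vec C\rvert$, $(A-\lvert\vec D\rvert)^2\ge 1$, $L(K)\ge E+(A-1)(E-J)\ge E$ — which I have checked against Proposition \ref{observer_constraint} and is correct, with equality exactly at $\partial/\partial t$. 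What your approach buys is transparency (the answer $m=E$ and the identities $\alpha=E^2+J^2$, $\beta=(E^2-J^2)^2$ make the formula $m^2=\tfrac12(\alpha+\sqrt\beta)$ self-evident); what it costs is the canonical-form reduction, which is Williamson's theorem for $\underline{sp}(4,\mathbb R)\cong\underline{so}(3,2)$ and holds whenever the Gibbons--Hull--Warner matrix is strictly positive definite, but genuinely fails (nilpotent normal forms) on the boundary of the positivity cone.

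On that flagged gap: you do not need a ``careful case analysis'' of degenerate or nilpotent orbits, and attempting one would be the wrong move. The paper's own device closes it: replace $e$ by $e+\epsilon$, which makes the quadratic form strictly positive (it adds $\epsilon\lvert\psi\rvert^2$), so the reduction applies; then let $\epsilon\to 0$. Both $\alpha$ and $\beta$ are polynomial in $e$, and $m$ is continuous in $e$ because $A\ge 1$ on $\mathcal{O}$ gives $m(e)\le m(e+\epsilon)\le L_e(K)+\epsilon A(K)$ for every fixed $K$. With that substitution for your second paragraph's caveat, the argument is complete.
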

\begin{proof} We divide the proof into two cases.
\bigskip

\textbf{Case 1:} $\vec{p}=0$.\\
From Theorem \ref{rigidity_Chrusciel}, $e\geq\sqrt{|\vec{j}|^2+|\vec{c}|^2+2|\vec{j}\times\vec{c}|}$. In particular, this implies that both $\alpha$ and $\beta$ are non-negative. 

We first further assume $e > \sqrt{|\vec{j}|^2+|\vec{c}|^2+2|\vec{j}\times\vec{c}|}$. Let $K=(A, \vec{B}, \vec{C}, \vec{D})$ be an observer Killing field. Let $|\vec{C}|=\eta,\ |\vec{D}|=d$, and $\theta$ be the angle between $\vec{B}$ and $\vec{C}$. Proposition \ref{observer_constraint} implies
\begin{align*}
Ad=\eta\sin\theta |\vec{B}|  \text{ and }
A^2+d^2=\eta^2+|\vec{B}|^2+1.
\end{align*}
We solve
\begin{align*}
A^2=\frac{\eta^2\sin^2\theta }{ \eta^2 \sin^2\theta-d^2}(\eta^2-d^2+1)\geq \eta^2+\frac{\eta^2}{\eta^2-d^2}
\end{align*}
and the equality holds as $\sin\theta=\pm 1$. In other words, $\vec{B}$ and $\vec{C}$ are orthogonal. Under the constraints $|\vec{C}|,\ |\vec{D}|$ and $\vec{C}\cdot\vec{D}$ fixed, the minimum of $\vec{D}\cdot\vec{j}+\vec{C}\cdot\vec{c}$ occurs only if $\vec{C},\vec{D}\in$ Span$\{\vec{j},\vec{c}\}$. Denote by $\theta_0$ the angle from $\vec{j}$ to $\vec{c}$ and by $\vartheta$ the angle from $\vec{j}$ to $\vec{D}$. Then
\begin{align*}
\vec{C}\cdot\vec{c}+\vec{D}\cdot\vec{j}&=d|\vec{j}|\cos\vartheta+\eta|\vec{c}|\cos (\vartheta\pm \frac{\pi}{2}-\theta_0)\\
                                       &=\left(d|\vec{j}|\pm \eta|\vec{c}|\sin\theta_0 \right) \cos\vartheta \mp \left(\eta|\vec{c}|\cos\theta_0 \right)\sin\vartheta\\
                                       &\geq -\sqrt{d^2|\vec{j}|^2+\eta^2|\vec{c}|^2\pm 2\eta d|\vec{j}||\vec{c}|\sin\theta_0}\\
                                       &\geq -\sqrt{d^2|\vec{j}|^2+\eta^2|\vec{c}|^2+ 2\eta d|\vec{j}\times\vec{c}|}
\end{align*}
and the equality holds when $\vec{C},\vec{D}$ and $\vec{c},\vec{j}$ have the same orientation and 
\begin{equation}\label{jd_angle}
\begin{split}
\cos\vartheta &=-\frac{d|\vec{j}|+\eta |\sin\theta_0\vec{c}|}{\sqrt{d^2|\vec{j}|^2+\eta^2|\vec{c}|^2+ 2\eta d|\vec{j}\times\vec{c}|}}\\
\sin\vartheta &=\textup{sgn}(\sin\theta_0) \frac{\eta |\vec{c}|\cos\theta_0}{\sqrt{d^2|\vec{j}|^2+\eta^2|\vec{c}|^2+ 2\eta d|\vec{j}\times\vec{c}|}}.
\end{split}
\end{equation}

Let $d=\eta\cos\phi$. Then
\begin{align*}
Ae+ \vec{C} \cdot \vec{c} + \vec{D} \cdot \vec{j} &\geq \left( \sqrt{\eta^2+ \sin^{-2}\phi}\right) e -\eta\sqrt{\cos^2\phi |\vec{j}|^2+|\vec{c}|^2+2|\vec{c}\times\vec{j}| \cos\phi} \\
&\geq \sin^{-1}\phi \sqrt{e^2-( |\vec{j}|^2 \cos^2\phi+|\vec{c}|^2+2|\vec{c}\times\vec{j}| \cos\phi ) }\\
&=\sqrt{|\vec{j}|^2+\frac{e^2-|\vec{j}|^2-|\vec{c}|^2}{\sin^2\phi}-\frac{2|\vec{c}\times\vec{j}|\cos\phi}{\sin^2\phi}}
\end{align*}
where the second inequality is an equality when
\begin{equation}\label{length_b}
\eta=\sin^{-1}\phi \frac{e}{\sqrt{e^2-(|\vec{j}|^2 \cos^2\phi  +|\vec{c}|^2+2 |\vec{c}\times\vec{j}| \cos\phi) }}.
\end{equation}

Denote $P=e^2-|\vec{j}|^2-|\vec{c}|^2,\ R=2|\vec{c}\times\vec{j}|$. By our assumption $P>R$ and 
\[\frac{P}{\sin^2\phi}-\frac{R\cos\phi }{\sin^2\phi}\] attains its minimum at $\cos\phi=\frac{P-\sqrt{P^2-R^2}}{R}$ with value $\frac{1}{2}\left(P+\sqrt{P^2-R^2}\right)$. Thus 
\begin{align*}
\left( Ae+ \vec{C} \cdot \vec{c} + \vec{D} \cdot \vec{j} \right)^2\geq \frac{1}{2}\left(e^2+|\vec{j}|^2-|\vec{c}|^2+\sqrt{(e^2-|\vec{j}|^2-|\vec{c}|^2)^2-4|\vec{c}\times\vec{j}|^2}  \right).
\end{align*}
The equality holds when $\cos\phi=\frac{P-\sqrt{P^2-R^2}}{R}$, $|\vec{C}|$ is given by (\ref{length_b}), $\vartheta$ is given by (\ref{jd_angle}), $\vec{C},\vec{D}$ and $\vec{c},\vec{j}$ span the same plane with the same orientation, and $\vec{B},\vec{C}$ are orthogonal. Suppose $e = \sqrt{|\vec{j}|^2+|\vec{c}|^2+2|\vec{j}\times\vec{c}|}$. Then by replacing $e$ by $e+\epsilon$ and letting $\epsilon\to 0$ the result still follows.\\

\textbf{Case 2:} General $\vec{p}.$\\
From Theorem \ref{rigidity_Chrusciel}, $e\geq |\vec{p}|$. Suppose $e>|\vec{p}|$ then by a coordinate change we can make $\vec{p}=0$. The result follows since $\alpha$ and $\beta$ are invariant under the $SO(3,2)$ action. For $e=|\vec{p}|$, we can also replace $e$ by $e+\epsilon$ and let $\epsilon\to 0$.
\end{proof}

We now prove Theorem \ref{pmt}.
\bigskip

\noindent {\bf Theorem \ref{pmt} }   {\it Let $(M, g, k)$ be an asymptotically AdS initial data set that satisfies the dominant energy condition as in \cite{Chrusciel-Maerten-Tod}. The rest mass of $(M, g, k)$ is non-negative. It vanishes if and only if $(M, g, k)$  is the data of a hypersurface in the AdS space-time $AdS^{3,1}$.
}

\bigskip
\begin{proof}
We may assume  $e>|\vec{p}|$ since  $e=|\vec{p}|$ is precisely the second condition in  Theorem \ref{rigidity_Chrusciel}  such that the data comes from a hypersurface in the AdS spacetime.
By boosting, we can further assume $\vec{p}=0$. In this case, the rest mass is simply
\[ m^2=\frac{1}{2}\left(e^2+|\vec{j}|^2-|\vec{c}|^2+\sqrt{(e^2-|\vec{j}|^2-|\vec{c}|^2)^2-4|\vec{c}\times\vec{j}|^2}  \right) .\]
As a result, $m=0$ implies $\vec{j}=0$ since  $e^2\geq |\vec{j}|^2+|\vec{c}|^2+2|\vec{j}\times\vec{c}|$ by Theorem \ref{rigidity_Chrusciel}. We conclude that 
$\vec{p} = \vec{j} = 0$ and $e = |\vec{c}|$. It follows that the data comes from an AdS hypersurface since this is a special case of the third condition in the rigidity statement of  Theorem \ref{rigidity_Chrusciel}.

\end{proof}

\section{Algebraic interpretations and comparisons with previous results}

In this section, we explain and compare the results as algebraic properties of the Lie algebra $\underline{so}(3,2)$ and the spinor representation
of the Clifford algebra $Cl(3,1)$, which is isomorphic to $\mathbb{C}^4$. 
Let $\gamma_0, \gamma_1, \gamma_2, \gamma_3$ be an orthonormal basis in $\R^{3,1}$ and identify them with elements in the Clifford Algebra
$Cl(3,1)$ which acts on $\mathbb{C}^4$ by the Clifford multiplication.

Witten's formula gives the following algebraic expression (compare with \cite[(4.6)]{Gibbons-Hull-Warner} and \cite[page 11]{Chrusciel-Maerten-Tod}): 
\begin{equation}\label{energy}\left((e\gamma_0-p_j \gamma_j+c_k(i \gamma_{0}\gamma_k)+j_m \frac{i}{2}\epsilon^{mkl}\gamma_{k}\gamma_l)\psi,\psi\right)\end{equation}
where $(e, \vec{p}, \vec{c}, \vec{j})$ are physical quantities. $(e\gamma_0-p_j \gamma_j+c_k(i \gamma_{0}\gamma_k)+j_m \frac{i}{2}\epsilon^{mkl}\gamma_{k}\gamma_l)\in Cl(3,1)$ acts on $\psi\in \mathbb{C}^4$.  The positive energy theorem shows that \eqref{energy} is non-negative for any $\psi \in \mathbb{C}^4$. Note that since $\gamma_0^2=1$ and $(\gamma_0 \psi, \phi)=\langle \psi, \phi\rangle$, we can rewrite \eqref{energy} in terms of the standard Hermitian form $\langle \cdot, \cdot\rangle$ on $\mathbb{C}^4$:
\begin{equation}\langle (e-p_j (\gamma_0 \gamma_j)+c_k(i \gamma_k)+j_m (\frac{i}{2}\epsilon^{mkl}\gamma_0 \gamma_{k}\gamma_l)\psi,\psi\rangle.\end{equation}

Gibbons et al. used the spinor representation \cite[(4.7)]{Gibbons-Hull-Warner} to identify $ e\gamma_0-p_j \gamma_j+c_k(i \gamma_{0}\gamma_k)+j_m \frac{i}{2}\epsilon^{mkl}\gamma_{k}\gamma_l $ with an element $J_{AB}$ in $\underline{so}(3, 2)$, and proceeded to further identify this with a real $4\times 4$ matrix $J_{ab}$ \cite[(A, 20)]{Gibbons-Hull-Warner} through the isomorphism $\underline{so}(3, 2)\cong \underline{sp}(4)$. The positive energy theorem is equivalent to $J_{ab}$ being non-negative definite. In \cite{Gibbons-Hull-Warner},  Gibbons et al derived the following inequalities under the stronger assumption that $J_{ab}$ is positive definite (or no Killing spinor).
\begin{align*}
&C_2 >0\\
&\frac{1}{2}C_2^2> C_4>\frac{1}{4}C_2^2
\end{align*}
where $C_2=J^A_{\ \ B} J^B_{\ \ A}$ and $C_4=J^A_{\ \ B} J^B_{\ \ C} J^C_{\ \ D} J^D_{\ \ A}$. In terms of $K^*=(e,\vec{p},\vec{c},\vec{j})$, $C_2=2\alpha (K^*)$ and $C_4=\alpha (K^*)^2 + \beta(K^*)$. The above inequalities can be rewritten as
\begin{align*}
& \alpha(K^*) >0\\
& \alpha(K^*)^2 > \beta(K^*) >0.
\end{align*}
Also, they can choose a suitable frame such that $\vec{p}$ and $\vec{c}$ vanish. In this case they got the expression of $e$ which is the same as the rest mass $m$ in (\ref{rest_mass}). One can also get $e=m$ from (\ref{alpha_beta}) assuming $\vec{p}=\vec{c}=0$.

Chru\'sciel et al. studied the more general case when $J_{ab}$ ($Q$ defined on \cite[page 11]{Chrusciel-Maerten-Tod}) is non-negative definite and derived several inequalities among $(e, \vec{p}, \vec{c}, \vec{j})$. In particular, they discussed the case when $J_{ab}$ or $Q$ has non-trivial zero eigenvalues and how such conditions imply the rigidity property that the physical data arises from the AdS spacetime. In the case $\vec{p}=0$ they obtained an inequality among energy, center of mass and angular momentum:
\begin{equation}
e\geq \sqrt{|\vec{c}|^2+|\vec{j}|^2+2|\vec{c}\times\vec{j}|} \tag{\ref{rigidity_Chrusciel}}
\end{equation}
which implies $\alpha (K^*)\geq 0$ and $\alpha(K^*)^2\geq \beta (K^*)\geq 0$ when $\vec{p}=0$.

Here we take a different approach from \cite{Gibbons-Hull-Warner} and \cite{Chrusciel-Maerten-Tod} and interpret \eqref{energy} as a pairing 
\[eA+\vec{p}\cdot \vec{B}+\vec{c}\cdot \vec{C}+\vec{j}\cdot \vec{D}\] of the conserved quantities $(e, \vec{p}, \vec{c}, \vec{j})$ and 
\begin{align*}
A=(\gamma_0\psi,\psi),&\ \   B_j=- (\gamma_j\psi,\psi)\\
C_j=(i\gamma_{0j}\psi,\psi) &,\ \ D_j= (\frac{i}{2}\epsilon^{jkl}\gamma_{kl}\psi,\psi).
\end{align*}
In our approach, $(A, \vec{B}, \vec{C}, \vec{D})$ is viewed an element in $\underline{so}(3, 2)$ through the map $\mathbb{C}^4\rightarrow \underline{so}(3, 2)$ defined in Corollary \ref{spinor_map}. The expression is considered as a paring of $(e, \vec{p}, \vec{c}, \vec{j}) \in \underline{so}(3, 2)^*$ with $(A, \vec{B}, \vec{C}, \vec{D})$ for an $(A, \vec{B}, \vec{C}, \vec{D})$ in the image of the map, which is identified with spinor Killing fields $\mathcal{S}$. We utilize the positivity of these expressions to study observer energy, or the pairing with elements in the set of observer Killing fields $\mathcal{O}$.

\end{document}